\documentclass[a4paper,11pt]{article}
\usepackage[english]{babel}
\usepackage{graphicx}

\usepackage{amsxtra}
\usepackage{amssymb}
\usepackage{amsmath}
\usepackage{amsthm}
\usepackage{color}
\usepackage{hyperref}

\newtheorem{prop}{Proposition}[section]
\newtheorem{define}[prop]{Definition}

\newtheorem{theo}[prop]{Theorem}

\numberwithin{equation}{section}

\theoremstyle{remark}
\newtheorem{rmq}{Remark}

\newcommand{\cxi}{\langle \xi\rangle}

\newcommand{\la}{\langle}
\newcommand{\ra}{\rangle}
\newcommand{\di}{\displaystyle}

\newcommand{\n}{\nabla}

\newcommand{\C}{\mathbb{C}}
\newcommand{\R}{\mathbb{R}}
\newcommand{\N}{\mathbb{N}}
\newcommand{\Z}{\mathbb{Z}}

\newcommand{\tX}{\widetilde{X}}

\newcommand{\tJ}{\widetilde{J}}
\newcommand{\tK}{\widetilde{K}}
\newcommand{\cxip}{\langle \xi'\rangle}
\newcommand{\cF}{\mathcal{F}}

\title{Interpolation of subspaces of infinite codimension}
\author{Corentin Audiard
\footnote{Sorbonne Universit\'e, Universit\'e Paris Cit\'e, CNRS, Laboratoire Jacques-Louis Lions, LJLL, F-75005 Paris, France}}
\begin{document}
\maketitle
\begin{abstract}Given three normed spaces $X_0,X_1,\widetilde{X_1}$ with 
$\widetilde{X_1}$ a closed subspace of $X_1$ with same norm, how does 
$[X_0,\widetilde{X_1}]_\theta$ compare to $[X_0,X_1]_\theta$ ?
This classical problem of interpolation theory has been mostly considered 
in the case of finite codimensional subspaces. We 
show here that quotient $J$ norms are a useful tool which allows to tackle 
several relevant cases.
\end{abstract}
\renewcommand{\abstractname}{R\'esum\'e}
\begin{abstract}
\end{abstract}

\section{Introduction}
The interpolation of subspaces is a classical topic which originates at 
least to the introduction of the famous Lions-Magenes space 
$H^{1/2}_{00}(\R^{+*})$, defined as the interpolation space 
$[L^2(\R^{+*}),H^1_0(\R^{+*})]_{1/2,2}$ (a short reminder about real interpolation theory 
is included in section \ref{sec:definterp}). Here $L^2$ and $H^1$ are the usual Lebesgue 
and Sobolev spaces endowed with norms 
$$\|u\|_{L^2}=\left(\int_{0}^\infty|u|^2dx\right)^{1/2},\ \|u\|_{H^1}=\|u\|_{L^2}+\|u'\|_{L^2}.$$ 
$H^1_0(\R^{+*})$ is the closure of $\mathcal{C}_c^\infty(\R^{+*})$ for 
the $H^1$ norm. \\
While for $\theta\neq 1/2$, the interpolation space $[L^2,H^1_0]_\theta$ coincides with $H^{\theta}_0$ (the closure of $\mathcal{C}_c^\infty(\R^{+*})$ for the 
$[L^2,H^1]_\theta$ norm), 
the space $H^{1/2}_{00}$ is algebraically and topologically different 
from $H^{1/2}_0$, as its norm is given by 
$$
\|u\|_{H^{1/2}_{00}(\R^{+*})}=\|u\|_{H^{1/2}(\R^{+*})}+\left(\int_0^\infty\frac{|u(x)|^2}{x}dx\right)^{1/2}.
$$
This remarkable result is a special case of the general problem of interpolation of 
subspaces : if $X_0,X_1$ is an interpolation couple, and $\tX_1$ is a closed 
subspace of $X_1$, it is often interesting to identify $[X_0,\tX_1]_{\theta,p},\ 
1\leq p\leq \infty$, and 
in particular to know if it is a closed subspace of $[X_0,X_1]_{\theta,p}$.\\
In many cases, there is a natural guess : if $\tX_1$ is the kernel of a linear map 
$L$ defined on $X_1$ and which ``makes sense''
on $[X_0,X_1]_{\theta,p}$ for a range $\theta >\theta_0$, then 
one expects the following alternative:
\begin{enumerate}
\item For $\theta>\theta_0$, $[X_0,\tX_1]_{\theta,p}$ is the kernel of $L|_{[X_0,X_1]_{\theta,p}}$ (hence a closed subspace),
\item For $\theta<\theta_0$, $[X_0,\tX_1]_{\theta,p}=[X_0,X_1]_{\theta,p}$ ,
\item For $\theta=\theta_0$, $[X_0,\tX_1]_{\theta,p}$ has a different, stronger 
topology than $[X_0,X_1]_{\theta,p}$ (and is not the same set).
\end{enumerate}
This is precisely what happens in the example mentioned above, with $\theta_0=1/2$.
Interpolation of subspaces appears very naturally in the analysis of partial differential 
equations. The books of Lions and Magenes \cite{LionsMagenes,LionsMagenes2,LionsMagenes3}
and Triebel \cite{Triebel2} are major classical reference,
and since then there have been countless developments and applications, 
see for example the reference book of Amann \cite{Amann2} on parabolic PDEs, 
or the recent article of Dalibard et al \cite{Dalibach}.
\\
There is an abundant literature solving the 
problem of interpolation of Sobolev spaces with boundary conditions, 
e.g. Grisvard \cite{Grisvard},\cite{Grisvard2}, for a method 
more in the spirit of this article we refer to L\"ofstr\"om \cite{LofNeuman}. 
\\
The general abstract problem was considered by several authors, 
including L\"ofstr\"om \cite{LofsSub}, 
Ivanov-Kalton \cite{IvanovKalton}, Asekritova et al 
\cite{AsCoKru}.
The simplest case is the one where $X_1\subset X_0$, $\tX_1$ is the kernel of a 
continuous linear form $l$ (hence a subspace of codimension one) and $l$ satisfies the 
following kind of homogeneity property 
\begin{equation}\label{hom1}
\exists\,\theta_0\in ]0,1[:\ \inf \{J(t,u):=\|u\|_{X_0}+t\|u\|_{X_1},\ u\in X_1,\ l(u)=1\}\sim t^{\theta_0},
\end{equation}
where $A(t)\sim B(t)$ means that there are two positive constants $C_1,C_2$ 
independent of $t\in ]0,1]$ such that 
$C_1A(t)\leq B(t)\leq C_2A(t)$. The condition \eqref{hom1} is readily seen to be equivalent to 
\begin{equation}\label{hom2}
\sup_{J(t,u)=1}|l(u)|\sim_0 t^{-\theta_0},
\end{equation}
or to the following form, which is more familiar to users of functional spaces
\begin{equation}\label{hom3}
\left\{
\begin{array}{ll}
|l(t,u)|\lesssim \|u\|_{X_0}^{1-\theta_0}\|u\|_{X_1}^{\theta_0},
\\
\exists\,u:\ ]0,1]\to X_1,\ \forall\,0<t\leq 1, \|u(t)\|_{X_0}=1,\ \|u(t)\|_{X_1}=t\text{ and } |l(u(t))|\sim_0 \|u(t)\|_{X_1}^{\theta_0}.
\end{array}\right.
\end{equation}
Under these conditions, L\"ofstr\"om obtained the following general result, confirming 
the ``natural guess'' :
\begin{theo}[L\"ofstr\"om\cite{LofsSub}]\label{th:lof}
Let $X_1\subset X_0$ Banach spaces,  $l$  a  linear form $X_1\to \C$ which satisfies \eqref{hom1}, $\tX_1=Ker(l)$. Then
\begin{enumerate}
 \item For $0<\theta<\theta_0$, $[X_0,\tX_1]_{\theta,p}=[X_0,X_1]_{\theta,p}$ (with 
 same topology).
 \item For $1>\theta>\theta_0$, $l$ can be continuously extended to $[X_0,X_1]_{\theta,p}$ and 
 $$[X_0,\tX_1]_{\theta,p}=[X_0,X_1]_{\theta,p}\cap Ker(l).$$
\end{enumerate}
If moreover the couple $(X_0,X_1)$ is linearisable by a family of operators $\Lambda(t)$, then 
$[X_0,\tX_1]_{\theta_0,p}$ is not a closed subspace of $[X_0,X_1]_{\theta_0,p}$ and 
is identified to
$$
[X_0,\tX_1]_{\theta_0,p}=\left\{u\in [X_0,X_1]_{\theta_0,p}:\ \int_0^1\frac{|l(\Lambda(t)u)|^p}
{t^{1+\theta_0 p}}dt<\infty\}\right\},
$$
with norm $\|u\|_{[X_0,\tX_1]_{\theta_0,p}}=\|u\|_{[X_0,X_1]_{\theta_0,p}}+
\|l(\Lambda(t)u\|_{L^p([0,1],dt/t)}$.
\end{theo}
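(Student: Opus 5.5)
We work throughout with the discrete $J$-method, $\|u\|_{\theta,p}\sim \inf\{\|(2^{-k\theta}J(2^k,u_k))_k\|_{\ell^p}:\ u=\sum u_k\}$, and with the $K$-functional. The two couples $(X_0,\tX_1)$ and $(X_0,X_1)$ share $X_0$ and the $X_1$-norm. Hence one inclusion is immediate: $[X_0,\tX_1]_{\theta,p}\hookrightarrow[X_0,X_1]_{\theta,p}$ continuously. Every statement therefore reduces to showing that an element of $[X_0,X_1]_{\theta,p}$ (with $l(u)=0$ in case 2) admits a $J$-decomposition whose pieces lie in $\ker l$, at comparable cost. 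Since $X_1\subset X_0$, only small $t\le 1$ (large $k\le 0$ after reindexing) matters; the pieces with $t\ge 1$ can be merged into one $X_1$-piece.

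\textbf{Case $\theta<\theta_0$.} Take a near-optimal decomposition $u=\sum_k u_k$ in $(X_0,X_1)$ with $t_k=2^{k}\le1$. By \eqref{hom1}, for each $k$ choose $v_k\in X_1$ with $l(v_k)=1$ and $J(t_k,v_k)\lesssim t_k^{\theta_0}$. Replace $u_k$ by $\tilde u_k=u_k-l(u_k)v_k\in\ker l$. By \eqref{hom2}, $|l(u_k)|\lesssim t_k^{-\theta_0}J(t_k,u_k)$, so $J(t_k,\tilde u_k)\lesssim J(t_k,u_k)$. The removed mass $\sum_k l(u_k)v_k$ must be put back. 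I would use a telescoping correction: set $w_k=v_k-v_{k+1}$ with coefficients $c_k=\sum_{j\le k}l(u_j)$ and Abel summation. The main obstacle is to make the correction converge and to show it is controlled by a discrete Hardy inequality. Using $\|v_k\|_{X_0}\lesssim t_k^{\theta_0}\to0$ as $t_k\to0$, the partial sums are of size $\sum_{j} t_j^{-\theta_0}J_j\cdot t_k^{\theta_0}$. The weight $2^{-k\theta}$ combined with $2^{(k-j)\theta_0}$ is summable precisely because $\theta<\theta_0$, and Hardy's inequality closes the estimate. The top remainder $l(u)\cdot v_{\text{top}}$ is handled similarly, since $v_k\to0$ in $X_0$ lets the total constant be pushed to $t=0$.

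\textbf{Case $\theta>\theta_0$.} First extend $l$. If $u=\sum u_k$, then $\sum|l(u_k)|\lesssim\sum t_k^{-\theta_0}J(t_k,u_k)\le \|(t_k^{-\theta}J_k)\|_{\ell^p}\,\|(t_k^{\theta-\theta_0})\|_{\ell^{p'}}$, which is finite for $t_k\le1$. This gives a continuous extension of $l$, independent of the decomposition by density of $X_1$. Now let $l(u)=0$ and correct each piece as before by $\tilde u_k=u_k-l(u_k)v_k$. The compensating term $\sum_k l(u_k)v_k$ is rewritten, using $\sum_k l(u_k)=0$, as $\sum_k c_k(v_k-v_{k-1})$ with tail sums $c_k=\sum_{j<k}l(u_j)=-\sum_{j\ge k}l(u_j)$, which are controlled by coarse-scale pieces. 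The Hardy inequality now works in the opposite direction thanks to $\theta>\theta_0$. Conversely, $[X_0,\tX_1]_{\theta,p}\subset\ker l$ follows from the continuity of $l$ together with density.

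\textbf{Case $\theta=\theta_0$, with linearisation.} $\Lambda(t)$ gives $u=\int_0^\infty \Lambda(t)u\,dt/t$ with $J(t,\Lambda(t)u)$ comparable to $K$-type quantities, so $\|u\|_{\theta,p}\sim\|t^{-\theta}J(t,\Lambda(t)u)\|_{L^p(dt/t)}$. Running the case-2 correction with $u_t=\Lambda(t)u$, the extra cost is exactly $\|t^{-\theta_0}\,|c(t)|\,t^{\theta_0}\ldots\|$. Using Hardy's inequality it reduces to $\|t^{-\theta_0}l(\Lambda(t)u)\|_{L^p(dt/t)}$, which gives the upper bound. For the lower bound, take any decomposition in $\ker l$ and apply $\Lambda(t)$ using its boundedness properties. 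Non-closedness then follows by exhibiting, via the family $u(t)$ of \eqref{hom3}, elements of $[X_0,X_1]_{\theta_0,p}$ for which the extra integral diverges logarithmically. I expect this endpoint, and in particular the lower bound, to be the hardest step.
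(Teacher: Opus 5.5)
\textbf{Items 1--2: summation direction reversed.} Your $J$-method scheme (subtract $l(u_k)v_k$ from each piece, re-insert the defect by Abel summation with differences of the $v_k$, which lie in $\ker l$, and close with a discrete Young/Hardy inequality) is a legitimate dual of the paper's route. In the paper, items 1--2 are a special case of Theorem \ref{th:exactorder}, because $N_J(t,u)=|l(u)|\inf_{l(w)=1}J(t,w)\sim t^{\theta_0}|l(u)|$; the telescoping there is done on near-optimal $K$-splittings. As written, however, your direction of summation is reversed in both cases. With $t_k=2^k\le 1$ and $a_k=t_k^{-\theta}J(t_k,u_k)\in\ell^p$, you only know $|l(u_j)|\lesssim t_j^{\theta-\theta_0}a_j$. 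So the partial sums must start from the end where $\sum_j|l(u_j)|$ is finite. For $\theta<\theta_0$, your $c_k=\sum_{j\le k}l(u_j)$ runs over the fine scales and may diverge. The kernel $2^{(k-j)(\theta_0-\theta)}$ you invoke is summable only over $j\ge k$. The remainder $l(u)v_{\mathrm{top}}$ involves $l(u)$, which is precisely what is unbounded on $[X_0,X_1]_{\theta,p}$ in this range. Use the finite coarse sums $C_k=\sum_{j=k}^{0}l(u_j)$ instead. Then $\sum_{k=K}^{0}l(u_k)v_k=C_Kv_K+\sum_{k=K+1}^{0}C_k(v_k-v_{k-1})$, the boundary term is $O(t_K^{\theta})$ in $X_0$, and $t_k^{\theta_0-\theta}|C_k|\lesssim\sum_{j\ge k}2^{-(j-k)(\theta_0-\theta)}a_j$; this mirrors the paper's telescoping from $j=0$. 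For $\theta>\theta_0$ your $c_k=\sum_{j<k}l(u_j)$ is the right choice, but it must be estimated through the fine pieces $j<k$, with kernel $2^{-(k-j)(\theta-\theta_0)}$. Going through the coarse representation $-\sum_{j\ge k}l(u_j)$, as you propose, gives a growing kernel. The hypothesis $l(u)=0$ is only needed to kill the coarse boundary term $l(u)v_0$.

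\textbf{Critical case.} Here the problems are more serious. First, $u=\int_0^\infty\Lambda(t)u\,dt/t$ is false. Indeed $\|u-\Lambda(t)u\|_{X_0}\lesssim K(t,u)\to0$, so $\Lambda(t)u\to u$ and the integral diverges. Likewise $J(t,\Lambda(t)u)\ge\|\Lambda(t)u\|_{X_0}$ is not $O(K(t,u))$, so $\|t^{-\theta}J(t,\Lambda(t)u)\|_{L^p(dt/t)}=\infty$ for $u\neq0$. The correct pieces are $(\Lambda(2^{-j-1})-\Lambda(2^{-j}))u$ with $\Lambda(1)=0$, as in the proof of Theorem \ref{th:interplin}. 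Their $l$-partial sums from the coarse end telescope exactly to $l(\Lambda(2^{-j})u)$, so no Hardy inequality is involved. Simpler still is the paper's pointwise-in-$t$ splitting (Proposition \ref{prop:abstract}). Take $l(z_t)=1$ with $J(t,z_t)\lesssim t^{\theta_0}$; the decomposition $u=\big(u-\Lambda(t)u+l(\Lambda(t)u)z_t\big)+\big(\Lambda(t)u-l(\Lambda(t)u)z_t\big)$ gives $\tK(t,u)\lesssim K(t,u)+t^{\theta_0}|l(\Lambda(t)u)|$. Conversely, take $y\in\tX_1$ almost optimal in $\tK(t,u)$ and write $l(\Lambda(t)u)=l(\Lambda(t)(u-y))+l((\Lambda(t)-I)y)$. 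Together with \eqref{hom2} and \eqref{boundLambda}, this gives $|l(\Lambda(t)u)|\lesssim t^{-\theta_0}\tK(t,u)$; the lower bound you expected to be hardest is two lines once you use $l(y)=0$.

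These bounds show the extra term is $\|l(\Lambda(t)u)\|_{L^p(dt/t)}$, as in the norm at the end of the statement; the weight $t^{1+\theta_0 p}$ in the displayed integral is a misprint. Your $\|t^{-\theta_0}l(\Lambda(t)u)\|_{L^p(dt/t)}$ is wrong. For instance, for $l(u)=u(0)$ on $(L^2,H^1)(\R^{+*})$, take $u$ equal to $x^\alpha$ near $0$ with $0<\alpha<1/2$. Then $l(\Lambda(t)u)\approx t^{\alpha}$, so your weight would exclude these functions from $H^{1/2}_{00}$, although they belong to it.

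Finally, elements of $[X_0,X_1]_{\theta_0,p}$ with divergent extra integral only prove strict inclusion, not non-closedness (a part the paper does not reprove). You need elements of the subspace with bounded $[X_0,X_1]_{\theta_0,p}$ norm and unbounded extra term. For example, $z_1-z_{2^{-N}}\in\tX_1$ satisfies $|l(\Lambda(t)(z_1-z_{2^{-N}}))|\ge 1/2$ for $C2^{-N}\le t\le c$, which gives an extra term $\gtrsim N^{1/p}$ when $p<\infty$.
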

The actual result in \cite{LofsSub} is more general as it handles the case where $\tX_1$ is the kernel 
of a finite number of linear forms of different homogeneity (with some independence condition, see section \ref{sec:indep} for more details),  
we  state this version for simplicity. The condition that $\tX_1$ should be of codimension one was 
later lifted by Asekritova et al \cite{AsCoKru}, but the proof heavily relied on the result of 
L\"ofstr\"om and as such can not be generalized to infinite codimension. We also point out that in 
a long (unfortunately unpublished) article, L\"ofstr\"om gave an interpolation result for kernels of
infinite codimension. A (simplified) statement is the following : 
\begin{theo}[L\"ofstr\"om \cite{Lofmieux}]\label{th:lof2}
Let $(X_0,X_1,Y)$, $X_1\subset X_0$ Banach spaces, $f:X_1\to Y$ a linear map, 
$\tX_1=\text{Ker}(f)$, and assume that for $0<t\leq 1$ there exists a linear map 
$V(t):\ X_1\to X_1$ such that 
\begin{itemize}
 \item For $u\in X_1$, $f(V(t)u)=f(u)$,
 \item For $0<t\leq 1$, $0<s\leq 1/t$, $J(t,V(t)u)\lesssim s^{-\theta_0}J(st,u)$,
 \item For $t\in ]0,1]$, $\text{Ker}(V(t))=\text{Ker}(f)$\footnote{This assumption is not present in the original article, but we believe that the proof of the theorem 
 requires it.}.
\end{itemize}
Then the conclusion of theorem \ref{th:lof} holds (except the ``linearizable part'').
\end{theo}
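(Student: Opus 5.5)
We work with the discrete $J$-method, using the fact that $[X_0,\tX_1]_{\theta,p}$ can be normed as a quotient $J$-norm:
$$\|u\|=\inf\Big\{\big\|(2^{-k\theta}J(2^k,u_k))_k\big\|_{\ell^p}:\ u=\sum_k u_k,\ u_k\in\tX_1\Big\}.$$
Since $X_1\subset X_0$, only $t\le 1$ (that is, $k\le 0$) matters. The inclusion $[X_0,\tX_1]_{\theta,p}\hookrightarrow[X_0,X_1]_{\theta,p}$ is trivial. In each regime the task is to take a $J$-decomposition $u=\sum_k u_k$ with $u_k\in X_1$ and correct it into one with pieces in $\mathrm{Ker} f$, at a cost controlled by the original $\ell^p$ norm. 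The correction will be built from $V$: for $v\in X_1$ we have $f(v-V(t)v)=0$, so $v-V(t)v\in\tX_1$. The hypothesis $\mathrm{Ker}\,V(t)=\mathrm{Ker} f$ will be used to make the maps $V(t)$ compatible across scales, so that the telescoping argument below closes.

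\textbf{Case $\theta<\theta_0$.} Write each piece as $u_k=(u_k-V(t_k)u_k)+V(t_k)u_k$, where $t_k=2^k$. The first part lies in $\tX_1$ and its $J(2^k,\cdot)$ norm is bounded by $J(2^k,u_k)$, using the second hypothesis with $s=1$. The remainder $V(2^k)u_k$ has to be pushed to smaller $t$. Set $w=V(2^k)u_k$ and use the telescoping identity
$$w=\sum_{j\le k}\big(V(2^j)w-V(2^{j-1})w\big).$$
Here $f(V(2^j)w)=f(w)$ for every $j$, so each difference lies in $\tX_1$. For the sum to equal $w$ we need $V(2^j)w\to 0$ in $X_0$ as $j\to-\infty$. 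This follows from $\|V(t)w\|_{X_0}\le J(t,V(t)w)\lesssim s^{-\theta_0}J(st,w)$ after choosing $s=2^{k-j}$, i.e.\ $st=2^k$, which gives decay $2^{-(k-j)\theta_0}J(2^k,w)$. The same bound controls each difference term. We then sum over $k$ with weights $2^{-j\theta}$:
$$2^{-j\theta}\,2^{-(k-j)\theta_0}\,J(2^k,u_k)=2^{(k-j)(\theta-\theta_0)}\,2^{-k\theta}J(2^k,u_k).$$
Since $\theta<\theta_0$ and $j\le k$, the factor $2^{(k-j)(\theta-\theta_0)}$ is summable, and Young's inequality for convolution on $\ell^p$ closes the estimate. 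The main technical point is that $J(2^j,V(2^j)w)$ involves $V$ composed twice. We handle it through the ``$\mathrm{Ker}\,V=\mathrm{Ker} f$'' assumption, which should give $V(t)V(t')=V(t)$ modulo corrections; if this fails, we instead apply $V(2^j)$ directly to $u_k$.

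\textbf{Case $\theta>\theta_0$.} First, $f$ extends continuously to $[X_0,X_1]_{\theta,p}$ with values in the quotient space $X_1/\tX_1$, normed via $V$, by the reverse summation: $\sum_k f(u_k)$ converges because the pieces at small $t$ have $f$-size controlled by $t^{-\theta_0}J$ and $\theta>\theta_0$. Given $u$ with $f(u)=0$, so that $\sum_k f(u_k)=0$, we redistribute the total defect by the same telescoping applied toward larger scales. We set
$$U_k=\sum_{j\le k}u_j,\qquad u_k'=u_k-V(2^k)U_k+V(2^{k-1})U_{k-1}.$$
Each $u_k'$ satisfies $f(u_k')=0$, so $u_k'\in\tX_1$, and the decomposition is still of $u$ because $V(2^k)U_k\to 0$ at both ends. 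The estimate now involves $2^{(k-j)(\theta_0-\theta)}$ with $j\le k$, which is summable since $\theta>\theta_0$.

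\textbf{Main obstacle.} I expect the hardest step to be the correct bookkeeping of the compositions of $V$ at different scales, together with justifying convergence of the telescoping sums in $X_0$.
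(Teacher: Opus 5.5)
Your route is genuinely different from the paper's, and after one repair (second paragraph) it works. The paper never uses $V$ inside the interpolation argument. The proposition following the definition of exact order turns the hypotheses into the quotient estimate $N_J(t,u)\sim t^{\theta_0}\|u\|_{X_1/\tX_1}$. Theorem~\ref{th:exactorder} is then proved by the discrete $K$-method: the $X_1$-part $u_1^j$ of a near-optimal splitting $u=u_0^j+u_1^j$ is replaced by a near-minimiser $v_1^j$ of $N_J(2^{-j},\cdot)$ in the same class. The quantity $2^{-j\theta_0}\|\Pi u_1^j\|$ is controlled by telescoping over scales with $N_J(2^{-k},u_1^{k+1}-u_1^k)\lesssim K(2^{-k},u)$, upward from $j=0$ if $\theta<\theta_0$, downward from $j\to\infty$ using $\Pi u=0$ if $\theta>\theta_0$. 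That route needs only the two-sided quotient estimate, so it also covers non-complemented subspaces, where no such $V(t)$ exists. Yours is a direct $J$-method construction with linear corrections built from $V$, and it uses the third hypothesis less than you say. For $\theta<\theta_0$, your fallback (telescoping $V(2^j)u_k-V(2^{j-1})u_k$ directly) needs no kernel assumption at all. In any case $V(t)V(t')=V(t)$ holds exactly, not modulo corrections, since $V(t')u-u\in\mathrm{Ker}\,f=\mathrm{Ker}\,V(t)$.

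The step that fails as written is in the case $\theta>\theta_0$. $U_k=\sum_{j\le k}u_j$ is a tail over \emph{fine} scales; it converges in $X_0$ but not in $X_1$ (already $U_0=u$), so $V(2^k)U_k$ is undefined. $V(t)$ factors through $\Pi$ and has no reason to act on $X_0$, and applying the second hypothesis to $U_k$ gives nothing since $J(st,U_k)=\infty$. Define the correction termwise instead: $W_k:=\sum_{j\le k}V(2^k)u_j$. The second hypothesis with $t=2^k$, $s=2^{j-k}\le 1$ gives $J(2^k,V(2^k)u_j)\lesssim 2^{(k-j)\theta_0}J(2^j,u_j)$. Hence the series converges in $X_1$ and $2^{-k\theta}J(2^k,W_k)\lesssim\sum_{j\le k}2^{(k-j)(\theta_0-\theta)}2^{-j\theta}J(2^j,u_j)$, which is exactly the kernel you announce. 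Continuity of $\Pi$ on $X_1$ and $\Pi V(t)=\Pi$ give $\Pi W_k=\sum_{j\le k}\Pi u_j$, hence $u_k-W_k+W_{k-1}\in\tX_1$. Also $\|W_k\|_{X_0}\le J(2^k,W_k)\lesssim 2^{k\theta_0}\sum_{j\le k}2^{j(\theta-\theta_0)}2^{-j\theta}J(2^j,u_j)\to 0$ as $k\to-\infty$. At the top end, $\Pi W_0=\sum_{j\le 0}\Pi u_j=0$ because $u$ lies in the kernel of the extended $\Pi$. So $W_0\in\tX_1=\mathrm{Ker}\,V(1)$, while $V(1)W_0=W_0$, and thus $W_0=0$. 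This is the only place where the kernel hypothesis genuinely enters your proof, and you could avoid it by keeping $W_0$ in the top piece, i.e. using $u_0+W_{-1}$, whose class is $0$. You should also record the inclusion $[X_0,\tX_1]_{\theta,p}\subset\mathrm{Ker}\,\Pi$, which follows from continuity of the extension, as in the paper.
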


Our aim is here is to provide several generalizations of theorem \ref{th:lof2},
stated in a way which is more intrinsic to the subspace $\tX_1$, and more precise, 
including the linearizable case which is very relevant in applications.\\
The article is organized as follows: we start in section \ref{sec:definterp}
with a quick reminder on interpolation, including the notion of linearisable couple, 
in section \ref{sec:exactorder} we give a 
notion of ``order of a subspace'', this allows to prove an extension of theorem \ref{th:lof2} . Then in section \ref{sec:linorder}
we give a refinement of this result in the linearisable case. 
We show that this refinement is essential to tackle classical and less classical 
examples for the interpolation of closed subspaces of Sobolev spaces 
(which was our original motivation in considering this problem).
Finally, in section \ref{sec:indep}, we extend our result to the case where 
$\tX_1$ is the intersection of subspaces with different order, under some 
``independence'' condition.
\paragraph{Acknowledgement} This work was partially funded by the ANR project 
ANR-24-CE40-3260, Hyperbolic Equations, Approximations and Dynamics (HEAD).
\section{Notations and reminder on interpolation}\label{sec:definterp}
By $A\lesssim B$, resp. $A\sim B$, we mean that there exists constants $C_1$, 
resp. $C_2,C_3$ independent of 
all parameters such that 
$$
A\leq C_1 B,\ \text{resp. } C_2A\leq B\leq C_3 B.
$$
\\
For details and proofs we refer to 
the book of Bergh and L\"ofstr\"om \cite{berglof}.\\
Throughout this article, $X_0,X_1$ are Banach spaces, with $X_1\subset X_0$, and 
$\|u\|_{X_1}\lesssim \|u\|_{X_0}$. $\tX_1$ is a closed subspace of $X_1$.
\\
The $K$ function is defined for $u\in X_0$, $t>0$ by 
$$
K(t,u,X_0,X_1)=\inf_{u=u_0+u_1\in X_0+X_1}\|u_0\|_{X_0}+t\|u_1\|_{X_1}.
$$
The $J$ function is defined for $u\in X_1$, $t>0$ by 
$$
J(t,u,X_0,X_1)=\|u\|_{X_0}+t\|u\|_{X_1}.
$$
For simplicity of notation, we will simply write $J(t,u,X_0,X_1)=J(t,u)$, 
$K(t,u,X_0,X_1)=K(t,u)$. Similarly, $\tJ$ and $\tK$ are the $J$ and $K$ functions associated 
to the couple $(X_0,\tX_1)$.
The interpolation space $[X_0,X_1]_{\theta,p}$ is defined as follows:
\begin{define}\label{def:interp}
 For $1\leq p\leq \infty$, $0<\theta<1$, 
 $$
 [X_0,X_1]_{\theta,p}=\left\{u\in X_0:\ \int_0^\infty\left(\frac{K(t,u)^p}{t^{\theta }}\right)^p
 \frac{dt}{t}<\infty\right\},
 $$
 it is equivalently defined as $\di \left\{u=\int_0^\infty \frac{v(t)}{t}dt:\ 
 \int_0^\infty \left(\frac{J(t,v(t)}
 {t^\theta}\right)^pdt\right\}$, with equivalent norms 
 $$
\left(\int_0^\infty\frac{K(t,u)^p}{t^{1+\theta p}}dt\right)^{1/p}\sim \inf_{u=\int_0^\infty vdt/t}
\left(\int_0^\infty\left(\frac{J(t,v(t)}
 {t^\theta}\right)^pdt\right)^{1/p}.
 $$
 Discrete equivalent norms are 
 $$
 \|u\|_{K,\theta,p}=\left\|(2^{\theta j}K(2^{-j},u))\right\|_{l^p(\Z)}, 
 \|u\|_{J,\theta,p}=\inf_{u=\sum_\Z v_j}\left\|(2^{\theta j}J(2^{-j},v_j))\right\|_{l^p(\Z)}
 $$
 If $X_1\subset X_0$, the integration interval can be replaced by $]0,1]$, and 
 the summation set can be replaced by $\N$. This gives equivalent norms.
\end{define}
Now the definition of linearisable spaces is the following : 
\begin{define}
When $X_1\subset X_0$, the couple $(X_0,X_1)$ is linearisable when there exists 
maps $(\Lambda(t))_{0<t\leq 1}$, 
 $X_0\to X_1$  such that for any $0<t\leq 1$, $u\in X_0$
 $$
 K(t,u)\sim \|u-\Lambda(t)u\|_{X_0}+t\|\Lambda(t)u\|_{X_1}.
 $$
 This is equivalent to the bound 
 \begin{equation}\label{boundLambda}
 \|u-\Lambda(t)u\|_{X_0}+t\|\Lambda(t)u\|_{X_1}\lesssim \min(\|u\|_{X_0},t\|u\|_{X_1}).
 \end{equation}
\end{define}

\section{Order of a subspace, applications}\label{sec:exactorder}
We recall that for simplicity we assume $X_1\subset X_0$, and $J(t,u)$ 
is defined as $\|u\|_{X_0}+t\|u\|_{X_1}$. 
Given $\tX_1$ a closed 
subspace of $X_1$, we denote the quotient $J$ norm 
$$
N_{J,\tX_1}(t,u)=\inf_{v\in \tX_1}J(t,u-v,X_0,X_1).
$$
(when the functional framework is clear, we will usually write $N_J$ instead of 
$N_{J,\tX_1}$).\\
Quotient norms are convenient to give an intrinsic notion of order for subspaces:
\begin{define}
 We say that a continuous linear map $f:X_1\to Y$ is exactly of order $\theta_0$ if 
 $$
 \|f(u)\|_Y\sim_0 t^{-\theta_0}N_{J,Ker(f)}(t,u-v).
 $$
 A subspace $\tX_1$ of $X_1$ is exactly of order $\theta_0$ when the projection operator 
 $ \Pi :\ X_1\to X_1/\tX_1$ is exactly of order $\theta_0$, namely
\begin{equation}\label{eq:caracexact}
N_J(t,u):=\inf_{v\in \tX_1} J(t,u-v)\sim_0 
t^{\theta_0}\|u\|_{X_1/\tX_1}.
\end{equation}
 \end{define}
This is indeed an extension of the settings of theorem \ref{th:lof2}:
\begin{prop} 
Let $f$ a continuous linear map $X_1\to Y$ with kernel 
$\tX_1:=\text{Ker}(f)$. If there exists $(V(t))_{0<t\leq 1}$ as 
in theorem \ref{th:lof2}, then by quotient they induce maps $X_1/\tX_1\to X_1$ which 
are right inverses of the projection $\Pi:\ X_1\to \tX_1$, and 
 $\tX_1$ is exactly of order $\theta_0$.
 \\
 Conversely, if $\tX_1$ is a subspace of $X_1$ exactly of order $\theta_0$ and the projection $X_1\to X_1/\tX_1$ has 
 right inverses $R(t)$ with
$\|R(t)\|_{(X_1/\tX_1,N_J(t,\cdot))\to (X_1,J(t,\cdot)})$ bounded uniformly for 
$0<t\leq 1$, then the family 
$(R(t)\circ \Pi)$ is a family of operators as in theorem \ref{th:lof2}.
 \end{prop}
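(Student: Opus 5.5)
The plan is to work directly from the three hypotheses of theorem \ref{th:lof2} and the characterization \eqref{eq:caracexact}. Throughout, $\|\cdot\|_{X_1/\tX_1}$ is the quotient norm and $N_J(t,\cdot)$ the quotient $J$ norm, both constant on classes $u+\tX_1$.

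\emph{Direct part.} Since $\text{Ker}(V(t))=\text{Ker}(f)=\tX_1$, each $V(t)$ factors through the quotient, so there is a map $\overline{V}(t):X_1/\tX_1\to X_1$ with $\overline{V}(t)\circ\Pi=V(t)$. Because $f(V(t)u)=f(u)$, the vectors $V(t)u$ and $u$ differ by an element of $\text{Ker} f=\tX_1$. Hence $\Pi(\overline{V}(t)\Pi u)=\Pi u$, so $\overline{V}(t)$ is a right inverse of $\Pi$ (here $\Pi$ is the projection onto $X_1/\tX_1$).

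For the order estimate, the upper bound comes first. We have $N_J(t,u)\le J(t,V(t)u)$ because $V(t)u\in u+\tX_1$. Apply the second hypothesis with $s=1/t$ to get
$$J(t,V(t)u)\lesssim t^{\theta_0}J(1,u)\sim t^{\theta_0}\|u\|_{X_1}.$$
Replacing $u$ by $u-v$ with $v\in\tX_1$ leaves $V(t)u$ unchanged, and taking the infimum over $v$ gives $N_J(t,u)\lesssim t^{\theta_0}\|u\|_{X_1/\tX_1}$.

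For the lower bound, set $t'=st$, i.e.\ take $s$ with $st=t'$ for a given $t'\le 1$, and let $t'$ be the point where we evaluate $N_J$. We then use $V(1)$ (or $V(t)$ at suitable $t$) to control $\|u\|_{X_1/\tX_1}$. Specifically, with $t=1$ and $s=t'\le 1$, the hypothesis gives
$$\|V(1)u\|_{X_1}\le J(1,V(1)u)\lesssim t'^{-\theta_0}J(t',u).$$
Since $V(1)u\in u+\tX_1$, the left side dominates $\|u\|_{X_1/\tX_1}$. Replacing $u$ by $u-v$ and taking the infimum over $v\in\tX_1$ then gives $\|u\|_{X_1/\tX_1}\lesssim t'^{-\theta_0}N_J(t',u)$. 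So $\tX_1$ is exactly of order $\theta_0$. The main point to check is that the constraint $0<s\le 1/t$ allows exactly these two choices, which it does.

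\emph{Converse.} Set $V(t)=R(t)\circ\Pi$. Then $\Pi V(t)u=\Pi u$, so $V(t)u-u\in\tX_1=\text{Ker}(f)$ and $f(V(t)u)=f(u)$. Also $\text{Ker} V(t)\supset\tX_1$, and conversely $V(t)u=0$ forces $\Pi u=\Pi V(t)u=0$, so $\text{Ker}(V(t))=\tX_1$.

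The key step, and the only nontrivial one, is the scaling estimate. From the uniform bound on $R(t)$ and exact order,
$$J(t,V(t)u)\lesssim N_J(t,u)\sim t^{\theta_0}\|u\|_{X_1/\tX_1}\sim t^{\theta_0}(st)^{-\theta_0}N_J(st,u)\le s^{-\theta_0}J(st,u).$$
Exact order is used at both $t$ and $st$, and this is legitimate because both lie in $]0,1]$ when $s\le 1/t$. This yields the second hypothesis of theorem \ref{th:lof2}.
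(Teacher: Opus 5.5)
Your proof is correct and follows essentially the same route as the paper. You factor $V(t)$ through the quotient using $\text{Ker}(V(t))=\tX_1$, get the two bounds in \eqref{eq:caracexact} from the scaling hypothesis with $s=1/t$ and with $t=1$, $s=t'$, and for the converse combine the uniform bound on $R(t)$ with exact order at $t$ and at $st$. It is slightly cleaner than the paper's version: you take the infimum over representatives $u-v$ directly instead of passing through $V(t)^2$ and $N_J(1,\cdot)$, and you explicitly check $\text{Ker}(R(t)\circ\Pi)=\tX_1$, which the paper leaves implicit.
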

 \paragraph{Remark} In general the right inverses $R(t)$ have no reason to exist, 
 since their existence imply that $\tX_1$ is supplemented in $X_1$,
 hence our framework is slightly more general than in theorem \ref{th:lof2}.
\begin{proof}
Since $\tX_1=\text{Ker}(V(t))$, $V(t)$ induces by quotient a linear map $\widetilde{V}:\ 
X_1/\tX_1\to X_1$. 
Moreover for $u\in X_1$, since $f(V(t)u)=f(u)$, we have $V(t)u-u\in \text{Ker}(f)=\tX_1$,
so if $\Pi$ is the projection $X_1\to X_1/\tX_1$, then $\Pi\circ \widetilde{V}(t)=I_d:\ 
X_1/\tX_1\to X_1/\tX_1$, and we have by the second point of the 
definition of $V$
$$
N_J(t,u)=N_J(t,V(t)u)=N_J(t,V^2(t)u)\lesssim t^{\theta_0} N_J(1,V(t)u)\sim t^{\theta_0}\|u\|_{X_1/\tX_1}.
$$
The converse inequality is readily obtained with the same computation :
$$
\|u\|_{X_1/\tX_1}\sim N_J(1,V(1)u)=N_J(1,V(t)V(1)u)\lesssim t^{-\theta_0}N_J(t,u). $$
Next if there exists right inverses $(R(t))_{0<t\leq 1}$ to $\Pi$, then for any 
$u\in X_1$, $u-R(t)\Pi u\in \tX_1$, so we have 
by definition $N_J(t,u)\leq J(t,R(t)\Pi u)$ and by assumption
$J(t,R(t)\Pi u)\lesssim N_J(t,u)$ so 
$$
N_J(t,u)\sim J(t,R(t)\Pi u).
$$
This gives directly from \eqref{eq:caracexact}, for $0<s\leq 1/t$
\begin{eqnarray*}
J(t,R(t)\Pi u)\sim N_J(t,u)&\sim& t^{\theta_0}\|u\|_{X_1/\tX_1}
\\
&=&(st)^{\theta_0}\|u\|_{X_1/\tX_1}s^{-\theta_0}
\\
&\sim& N_J(st,u)s^{-\theta_0}
\\
&\lesssim &
s^{-\theta_0}J(st,u).
\end{eqnarray*}
\end{proof}
The main result of this section is the following extension of theorem 
\ref{th:lof2}:
\begin{theo}\label{th:exactorder}
 If $\tX_1$ is a subspace of order exactly $\theta_0$, the map $\Pi:\ X_1\to X_1/\tX_1$ extends 
 continuously on $X_\theta$ for $\theta>\theta_0$. Denote 
\begin{equation}
\tX_{\theta,p}=
\left\{
\begin{array}{ll}
[X_0,X_1]_{\theta,p},\ \theta < \theta_0,\\
\left[X_0,X_1\right]_{\theta,p}\cap \text{Ker}(\Pi),\ \theta>\theta_0.
\end{array}
\right.
\end{equation}
Then we have for $\theta\neq \theta_0$, $[X_0,\tX_1]_{\theta,p}=\tX_{\theta,p}.$
\end{theo}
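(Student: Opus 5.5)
Throughout, $X_\theta$ denotes $[X_0,X_1]_{\theta,p}$. The plan is to work with the discrete $J$-method, combined with the quotient $J$-norm $N_J$ and the relation \eqref{eq:caracexact}. The inclusion $[X_0,\tX_1]_{\theta,p}\subset X_\theta$ is trivial because $\tJ=J$ on $\tX_1$.

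\textbf{Step 1 (extension of $\Pi$ for $\theta>\theta_0$).} Take $u\in X_\theta$ with decomposition $u=\sum_{j\ge0}v_j$, $v_j\in X_1$. By \eqref{eq:caracexact},
$$\|\Pi v_j\|_{X_1/\tX_1}\lesssim 2^{j\theta_0}N_J(2^{-j},v_j)\le 2^{j\theta_0}J(2^{-j},v_j).$$
Hence $\sum_j\|\Pi v_j\|\lesssim \sum_j 2^{-j(\theta-\theta_0)}\,2^{j\theta}J(2^{-j},v_j)$, and Hölder's inequality bounds this by $\|u\|_{J,\theta,p}$. Independence of the decomposition follows by density of $X_1$ in $X_\theta$ ($p<\infty$), or by checking that $\sum v_j=0$ forces $\sum \Pi v_j=0$ via the same estimate. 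This gives a continuous extension, so $\tX_{\theta,p}$ is closed and $[X_0,\tX_1]_{\theta,p}\subset \tX_{\theta,p}$ because $\Pi$ vanishes on $\tX_1$.

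\textbf{Step 2 (the reverse inclusions).} The key construction is to replace each $v_j$ by $w_j=v_j-a_j$ with $a_j\in\tX_1$ almost realizing $N_J(2^{-j},v_j)$, so that $J(2^{-j},w_j)\le 2N_J(2^{-j},v_j)$. We then have to correct the error $\sum_j a_j$, or rather its class $\sum_j\Pi v_j$.

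For $\theta<\theta_0$: for each $j$ pick $b_j\in\tX_1$ with $J(2^{-j},v_j-b_j)\le 2N_J(2^{-j},v_j)\lesssim 2^{-j\theta_0}\|\Pi v_j\|$, and write $v_j=(v_j-b_j)+b_j$. The term $b_j\in\tX_1$ is harmless as a piece of a $\tJ$-decomposition, provided $\tJ(2^{-j},b_j)$ is controlled. It is, since $J(2^{-j},b_j)\le J(2^{-j},v_j)+J(2^{-j},v_j-b_j)\lesssim J(2^{-j},v_j)$. The remaining term $c_j:=v_j-b_j$ is small, but it lives in $X_1$, not in $\tX_1$. We must therefore re-express $c_j$ as a sum over finer scales $k\ge j$ of elements of $\tX_1$. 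The idea: $N_J(2^{-k},c_j)\sim 2^{-k\theta_0}\|\Pi c_j\|$ decays in $k$, so we telescope. Choose $d_{j,k}\in c_j+\tX_1$ with $J(2^{-k},d_{j,k})\lesssim 2^{-k\theta_0}\|\Pi v_j\|$. Then $d_{j,k}-d_{j,k+1}\in\tX_1$, and $d_{j,k}\to0$ in $X_0$ as $k\to\infty$, because $\|d_{j,k}\|_{X_0}\lesssim 2^{-k\theta_0}$. So $c_j=\sum_{k\ge j}(d_{j,k}-d_{j,k+1})$ is a $\tX_1$-valued decomposition. Its $\tJ$-cost at scale $k$ is $\lesssim 2^{-k\theta_0}\|\Pi v_j\|\lesssim 2^{(j-k)\theta_0}J(2^{-j},v_j)$. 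Summing over $j\le k$ with weight $2^{k\theta}$ gives the discrete convolution kernel $2^{(k-j)(\theta-\theta_0)}$, which is in $\ell^1$ exactly when $\theta<\theta_0$. Young's inequality then gives $\|u\|_{[X_0,\tX_1]_{\theta,p}}\lesssim\|u\|_{X_\theta}$.

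For $\theta>\theta_0$ with $\Pi u=0$: we have $\sum_j \Pi v_j=0$, so the classes $\sigma_k:=\sum_{j\le k}\Pi v_j=-\sum_{j>k}\Pi v_j$ satisfy $\|\sigma_k\|\lesssim\sum_{j>k}2^{j\theta_0}J(2^{-j},v_j)$. Choose $e_k\in X_1$ with $\Pi e_k=\sigma_k$ and $J(2^{-k},e_k)\lesssim 2^{-k\theta_0}\|\sigma_k\|$, which is possible by \eqref{eq:caracexact}. Set $w_k=v_k-e_k+e_{k-1}$, with $e_{-1}=0$. Then $\Pi w_k=0$, so $w_k\in\tX_1$, and $\sum w_k=u$ because $e_k\to0$ in $X_0$. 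The cost is $2^{k\theta}J(2^{-k},e_k)\lesssim\sum_{j>k}2^{(k-j)(\theta-\theta_0)}\,2^{j\theta}J(2^{-j},v_j)$, a kernel that is summable since $\theta>\theta_0$. Hardy's inequality / Young's inequality concludes. The term $e_{k-1}$ measured at scale $2^{-k}$ is handled using $J(2^{-k},\cdot)\le J(2^{-k+1},\cdot)$.

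\textbf{Main obstacle.} I expect the delicate points to be convergence in $X_0$ of the telescoped series in the case $\theta<\theta_0$, and the well-definedness of the extension of $\Pi$ when $p=\infty$ (no density). The bookkeeping of the weights is routine. For $p=\infty$, I would define $\Pi u$ via any $J$-decomposition and show independence using $\sum v_j=0\Rightarrow\|\sum_{j\le k}\Pi v_j\|\to0$, by the same tail estimate as above.
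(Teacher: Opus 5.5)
Your proof is correct. It runs entirely on the $J$-side, whereas the paper uses the $J$-method only to extend $\Pi$ and then identifies the spaces with the discrete $K$-method.

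\textbf{The paper's route.} It takes near-optimal splittings $u=u_0^j+u_1^j$ and replaces $u_1^j$ by a cheap representative $v_1^j$ of $\Pi u_1^j$. This gives $\tK(2^{-j},u)\lesssim K(2^{-j},u)+2^{-j\theta_0}\|\Pi u_1^j\|_{X_1/\tX_1}$. It then controls $\Pi u_1^j$ by telescoping, from $j=0$ if $\theta<\theta_0$ and from $j=\infty$ if $\theta>\theta_0$. The key input is that $u_1^{k+1}-u_1^k=u_0^k-u_0^{k+1}$ has $J(2^{-k},\cdot)\lesssim K(2^{-k},u)$.

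\textbf{How your version relates.} Your construction $w_k=v_k-e_k+e_{k-1}$ for $\theta>\theta_0$ is exactly the $J$-dual of this: the partial sum $\sum_{j\le k}v_j$ plays the role of $u_1^k$, and $e_k$ that of $v_1^k$.

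\textbf{What your version buys.} The hypothesis $\Pi u=0$ enters literally as $\sum_j\Pi v_j=0$, i.e.\ through the definition of the extension. The paper's downward telescoping tacitly needs $\Pi u_1^k\to\Pi u$, which requires relating the $K$-splittings to a $J$-decomposition. You also address well-definedness of the extension, which the paper skips.

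\textbf{What the paper's version buys.} It uses one mechanism for both regimes. You can recover this as well. Your double-indexed telescoping for $\theta<\theta_0$ is valid: the reassembly is justified by absolute convergence in $X_0$, since $\sum_j\sum_{k\ge j}\|d_{j,k}-d_{j,k+1}\|_{X_0}\lesssim\sum_j J(2^{-j},v_j)<\infty$. But it is unnecessary. The same $e_k$ construction works with $\sigma_k=\sum_{j\le k}\Pi v_j$ bounded by the head sum $\sum_{j\le k}2^{j\theta_0}J(2^{-j},v_j)$. This yields the summable kernel $2^{-(k-j)(\theta_0-\theta)}$. Moreover $e_K\to 0$ in $X_0$, because $2^{K\theta}J(2^{-K},e_K)$ is an $\ell^p$ sequence. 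The two regimes then differ only in head versus tail bounds, exactly as in the paper.

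\textbf{One small correction.} Density of $X_1$ does not by itself settle well-definedness of the extension. Extending by density requires $\Pi u=\sum_j\Pi v_j$ for $u\in X_1$, which is the same check. Your alternative check does go through. Set $S_k=\sum_{j\le k}v_j=-\sum_{j>k}v_j$ and apply \eqref{eq:caracexact} at scale $2^{-k}$ to $S_k$. Bound $2^{k\theta_0}\|S_k\|_{X_0}$ by the tail $\sum_{j>k}2^{-j(\theta-\theta_0)}2^{j\theta}J(2^{-j},v_j)$, and $2^{k(\theta_0-1)}\|S_k\|_{X_1}$ by the convolution $\sum_{j\le k}2^{-(k-j)(1-\theta_0)}2^{-j(\theta-\theta_0)}2^{j\theta}J(2^{-j},v_j)$. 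Both tend to $0$.
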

\begin{proof}
 We first check that $\Pi$ can be extended : for $u\in X_{\theta,p}$, there exists 
 $(v(t))$ such 
 that 
 $$
 u=\int_0^\infty\frac{v(t)}{t}dt,\ \text{with }\int_0^\infty\left(\frac{J(t,v(t))}{t^\theta}\right)^p
 \frac{dt}{t}<\infty.
 $$
Up to a small modification of $v$, we can replace the integration interval 
$]0,\infty[$ by $]0,1]$, indeed 
$$
\int_1^\infty \frac{\|v(t)\|_{X_1}}{t}dt\leq \int_1^\infty 
 \frac{J(t,v(t))}{t^2}dt\leq \|J(t,v)/t^\theta\|_{L^p(dt/t)}
 \|1/t^{(1-\theta)}\|_{L^{p'}([1,\infty[,dt/t)}<\infty,
$$
so it suffices to modify $v$
as $v(t)1_{[0,1]}+\frac{\int_1^\infty v(s)ds/s}{\ln 2}1_{[1/2,1]}$.\\
Now to check that $\Pi(\int_0^1v(t)dt/t)$
is well defined, we use $\|(v(t)\|_{X_1/\tX_1}\sim t^{-\theta_0}N_J(t,v(t))$ to deduce 
$$
\int_0^1\frac{\|v(t)\|_{X_1/\tX_1}}{t}dt\lesssim \int_0^1\frac{J(t,v(t))}{t^{\theta}}
\frac{dt}{t^{1+\theta_0-\theta}}\leq \|J(t,v(t))/t^{\theta}\|_{L^p(dt/t)}
\|t^{\theta-\theta_0}\|_{L^{p'}(]0,1],dt/t)}<\infty,
$$
this suffices to extend $\Pi$.
\\
Next we turn to the identification of the space $[X_0,\tX_1]_{\theta,p}$: we use the discrete characterization of this space with the K-method : given $u\in X_\theta$, 
for any $j\in \N$, 
$$
\exists\,(u_0^j,u_1^j)\in X_0\times X_1,\ u=u_0^j+u_1^j,\ K(2^{-j},u)\sim \|u_0^j\|_{X_0}+
2^{-j}\|u_1^j\|_{X_1},
$$
$$
\|u\|_{X_{\theta,p}}\sim \left(\sum_0^\infty (\|u_0^j\|_{X_0}+2^{-j}\|u_1^j\|_{X_1})^p2^{p\theta j}
\right)^{1/p}.
$$
Moreover by assumption there exists $(v_1^j)_{j\in \N}$ with $\Pi(v_1^j)=\Pi(u_1^j)$ and 
$$
J(2^{-j},v_1^j)\sim 2^{-j\theta_0}\|u^j_1\|_{X_1/\tX_1}.
$$
\paragraph{The case $\theta<\theta_0$} 
We have $u=u_0^j+v_1^j+u_1^j-v_1^j,\ u_1^j-v_1^j\in \tX_1$, hence the bound
\begin{eqnarray}\nonumber
K(2^{-j},u,X_0,\tX_1)=\tK(2^{-j},u)&\leq& \|u_0^j+v_1^j\|_{X_0}+2^{-j}\|u_1^j-v_1^j\|_{X_1}
\\
\nonumber
&\lesssim& K(2^{-j},u)+J(2^{-j},v_1^j)
\\
\label{eq:tKtoK}
&\sim& K(2^{-j},u)+2^{-j\theta_0}\|v_1^j\|_{X_1/\tX_1}.
\end{eqnarray}
By definition $\|2^{\theta j}K(2^{-j},u)\|_{l^p}=\|u\|_{[X_0,X_1]_{\theta,p}}$, for the second term we use the standard trick involving a telescopic series
\begin{eqnarray*}
v_1^j=v_1^0+\sum_{0}^{j-1}v_1^{k+1}-v_1^{k}\Rightarrow 2^{-j\theta_0}
\|v_1^j\|_{X_1/\tX_1}&\lesssim &2^{-j\theta_0}\|v_1^0\|_{X_1}+
\sum_{0}^{j-1}2^{-j\theta_0}\|v_1^{k+1}-v_1^{k}\|_{X_1/\tX_1}
\\
&\lesssim& 2^{-j\theta_0}\|v_1^0\|_{X_1}+ \sum_{0}^{j-1}2^{(k-j)\theta_0} J(2^{-k},u_1^{k+1}-u_1^{k}).
\end{eqnarray*}
Obviously, $2^{j(\theta-\theta_0)}\|v_1^0\|_{X_1}\in l^p(\N)$, so we focus on the 
control of the sum : note that $u_1^k-u_1^{k+1}=u_0^{k+1}-u_0^{k}$, hence 
$$
J(2^{-k},u_1^{k+1}-u_1^{k})=\|u_0^{k+1}-u_0^k\|_{X_0}+2^{-k}\|u_1^k-u_1^{k+1}\|_{X_1}
\lesssim K(2^{-k},u).
$$
We deduce 
\begin{eqnarray*}
2^{j\theta}\sum_{k=0}^{j-1}2^{(k-j)\theta_0} J(2^{-k},u_1^{k+1}-u_1^{k})&\lesssim & \sum_{0}^{j-1} 2^{(k-j)\theta_0}
2^{(j-k)\theta} 2^{k\theta}K(2^{-k},u)
\\
&=& \sum_{k=0}^j 2^{-(j-k)(\theta_0-\theta)}2^{k\theta} K(2^{-k},u).
\end{eqnarray*}
We recognize a convolution product between $(2^{-j(\theta_0-\theta)}1_{j>0})$ and 
$(2^{j\theta} K(2^{-j},u)1_{j\geq 0})$, hence using Young's inequality and plugging 
the bound into \eqref{eq:tKtoK}
$$
\|2^{j\theta}\tK(2^{-j},u)\|_{l^p}\lesssim \|2^{j\theta}K(2^{-j},u)\|_{l^p}.
$$
This gives  the inclusion $[X_0,X_1]_{\theta,p}\subset [X_0,\tX_1]_{\theta,p}$, and the converse is obvious\footnote{It is interesting to remark the constants in the norm equivalence depend on $\|(2^{-j(\theta_0-\theta)})\|_{l^1}$, and thus degenerate at an 
explicit rate as $\theta\to \theta_0$}. 
\paragraph{The case $\theta >\theta_0$} We assume $u\in X_{\theta,p}\cap \text{Ker}(\Pi)$. 
We restart with the same decomposition as in the case $\theta<\theta_0$,
$u=u_0^j+v_1^j+u_1^j-v_1^j$, as previously the proof boils down to check 
$\|2^{j(\theta-\theta_0)}\|v_1^j\|_{X_1/\tX_1}\|_{l^p(\N)}\lesssim 
\|u\|_{X_{\theta,p}}$. We use the telescopic series
\begin{eqnarray*}
\Pi v_1^j=\sum_{k=j}^\infty \Pi v_1^k-\Pi v_1^{k+1}\Rightarrow 2^{-j\theta_0}
\|v_1^j\|_{X_1/\tX_1}
&\leq& \sum_j^\infty 
2^{-j\theta_0}\|v_1^k-v_1^{k+1}\|_{X_1/\tX_1}
\\
&\sim&\sum_{k=j}^\infty 2^{(k-j)\theta_0}N_J(2^{-k},v_1^k-v_1^{k+1}).
\end{eqnarray*}
Note that it is key here that $u\in \text{Ker}(\Pi)$ to ensure the first equality.
\\
Now as in the case $\theta<\theta_0$, we have $N_J(2^{-k},v_1^k-v_1^{k+1})
=N_J(2^{-k},u_1^k-u_1^{k+1})\lesssim K(2^{-k},u)$,
hence the bound
$$
2^{j(\theta-\theta_0)}\|v_1^j\|_{X_1/\tX_1}\lesssim 
\sum_{k=j}^\infty 
2^{(j-k)(\theta-\theta_0)}2^{k\theta}K(2^{-k},u),
$$
this leads to the same conclusion using $(2^{k(\theta-\theta_0)}1_{k\leq 0})\in l^1(\Z)$:
$$
\|2^{j\theta}\tK(2^{-j},u)\|_{l^p}\lesssim \|2^{j\theta}K(2^{-j},u)\|_{l^p}.
$$
This ensures the inclusion $\tX_{\theta,p}\subset [X_0,\tX_1]_{\theta,p}$, and the inverse 
inclusion is obvious by continuity of the operator $\Pi$ on $X_{\theta,p}$.
\end{proof}

We refer to L\"ofstr\"om \cite{Lofmieux} section $3$ for examples of applications. Unfortunately as we will readily see, this result does not apply directly 
even for the most classical interpolation problem of identifying $[L^2(\R^{d-1}\times \R^{+*}),
H^1_0(\R^{d-1}\times \R^{+*})]_{\theta,2}$ if $d>1$.
\\
When working on the domain $\R^{+*}\times \R^{d-1}$, it is convenient 
to denote $x=(x_1,x')$ and use the tangential Fourier transform 
$$
\widehat{u}(x_1,\xi')=\int_{\R^{d-1}}u(x_1,x')e^{-ix'\cdot \xi'}dx',\ 
(x_1,\xi')\in \R^{+*}\times \R^{d-1}.
$$
\begin{prop}\label{prop:JH1}
Let $X_0=L^2(\R^{+*}\times \R^{d-1})$, $(X_1,\tX_1)=(H^1(\R^{+*}\times \R^{d-1}),H^1_0(\R^{+*}\times \R^{d-1}))$, the infimum
$$
N_J(t,u)=\inf_{v\equiv u[H^1_0]} (\|v\|_2^2+t^2\|v\|_{H^1}^2)^{1/2}
$$ 
is attained by 
$$
P_tu(x_1,\xi')=e^{-x_1/\lambda_t(\xi')}\widehat{tr(u)}(\xi'),\ \lambda_t=\frac{t}
{\sqrt{1+t^2\la \xi'\ra^2}},\ 
\cxip=\sqrt{1+|\xi'|^2},
$$
where $\text{tr}$ is the trace at $x_1=0$.\\
$P_t$ is a projection and we have 
\begin{equation}\label{eq:estimNJ}
N_J(t,u)=J(t,P_tu)\sim t^{1/2}\left(\int_{\R^{d-1}}\sqrt{1+t^2\cxi^2}
|\widehat{tr(u)}|^2d\xi'\right)^{1/2}. 
\end{equation}
The projection $\Pi:\ H^1(\R^{+*}\times \R^{d-1})\to 
H^1/H^1_0(\R^{+*}\times \R^{d-1})$ is not exactly of order $1/2$.
\end{prop}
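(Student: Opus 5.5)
The plan is to reduce everything, via the tangential Fourier transform and Plancherel, to a family of one-dimensional variational problems indexed by $\xi'$. Since $H^1_0$ is exactly the kernel of the trace, $v\equiv u\ [H^1_0]$ means $\text{tr}(v)=\text{tr}(u)$. The problem then becomes: minimize
$$\int_{\R^{d-1}}\int_0^\infty \big((1+t^2\cxip^2)|\widehat v|^2+t^2|\partial_{x_1}\widehat v|^2\big)\,dx_1\,d\xi'$$
under the constraint $\widehat v(0,\xi')=\widehat{tr(u)}(\xi')$. Here the $H^1$ norm is taken in its Hilbert form $\|v\|_2^2+\|\n v\|_2^2$, which is equivalent to the one in the introduction and is the one implicit in the squared $J$.

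The constraint decouples in $\xi'$, so for each fixed $\xi'$ I minimize $\int_0^\infty a|w|^2+t^2|w'|^2$ with $w(0)=c$ and $a=1+t^2\cxip^2$. The Euler–Lagrange equation is $t^2w''=aw$, and the decaying solution is $w=c\,e^{-x_1\sqrt a/t}=c\,e^{-x_1/\lambda_t}$. Minimality follows from the usual convexity argument: any competitor is $w+h$ with $h(0)=0$, and the cross term vanishes after integrating by parts. The minimal value is $2\sqrt a\,t\,|c|^2/2=t\sqrt a\,|c|^2$, obtained by direct computation ($\int_0^\infty e^{-2x\sqrt a/t}=t/(2\sqrt a)$, and both terms contribute equally). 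Integrating in $\xi'$ gives
$$N_J(t,u)^2=t\int\sqrt{1+t^2\cxip^2}\,|\widehat{tr(u)}|^2d\xi',$$
which is \eqref{eq:estimNJ}. The equivalence $\sim$ only accounts for the $\ell^1$ versus $\ell^2$ form of $J$.

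For the remaining properties, $P_tu$ is in $H^1$ as soon as $\text{tr}(u)\in H^{1/2}$, which the formula for the value confirms. Linearity is clear, and $\text{tr}(P_tu)=\text{tr}(u)$ gives $P_t^2=P_t$.

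For the negative statement, I use that $\|u\|_{X_1/\tX_1}=N_J(1,u)\sim\|\text{tr}(u)\|_{H^{1/2}}$ up to constants. Exact order $1/2$ would require
$$t\int\sqrt{1+t^2\cxip^2}\,|g|^2\sim t\int\cxip|g|^2$$
uniformly for $t\le1$, for all $g\in H^{1/2}$. To refute this, take $g$ with $\widehat g$ concentrated where $\cxip\sim R$ and let $R\to\infty$ with $t=1/R^2$. The left side is then about $t\|g\|_{L^2}^2$, while the right side is about $tR\|g\|_{L^2}^2$, so the ratio $R\to\infty$. More generally the left side scales like $t$ for frequencies $\lesssim 1/t$ and like $t^2\cxip$ above, so no single power works. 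This is the only mildly delicate point: the equivalence fails precisely in the regime $t\cxip\ll\cxip$, and I expect it to go through without difficulty.
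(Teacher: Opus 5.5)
Your proposal is correct and follows essentially the same route as the paper: tangential Fourier transform, the Euler--Lagrange ODE $t^2w''=(1+t^2\cxip^2)w$ with its decaying solution $e^{-x_1/\lambda_t}\widehat{\text{tr}(u)}$, a direct computation of $J(t,P_tu)$, and a comparison of $\sqrt{1+t^2\cxip^2}$ with $\cxip$ for the negative statement. The differences are minor: you certify minimality frequency by frequency with the convexity and integration-by-parts argument rather than the Hilbert projection theorem plus the weak Euler--Lagrange equation, and you replace the paper's ``obviously'' with an explicit counterexample, traces whose Fourier transform is concentrated at $\cxip\sim R$ with $t=R^{-2}$.
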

\begin{proof}
Taking the formula \eqref{eq:estimNJ} for granted, then the fact that 
the projection $H^1\to H^1/H^1_0$ is not of order $1/2$ 
is  straightforward, since obviously we can not have 
$$
\int_{\R^{d-1}}\cxip |\text{tr}(\widehat{u})|^2d\xi'\leq C\int_{\R^{d-1}} \sqrt{1+t^2\cxip^2}|\widehat{tr(u)}|^2d\xi',
$$
with $C$ independent of $u,t$.\\
The existence of a (unique) infimum to
$$
\inf_{u-w\in H^1_0} J^2(t,w)=\inf_{u-w\in H^1_0} \|w\|_2^2+t^2\|w\|_{H^1}^2,
$$
follows from Hilbert projection theorem. In order to compute it, we use its variational characterization: for any $\varphi \in \mathcal{D}(\R^{+*}\times \R^{d-1})$
$$
0=\text{Re}\int_{\R^{d-1}\times \R^+} P_tu\overline{\varphi}+t^2(P_tu\overline{\varphi}+\n P_tu\cdot \overline{\n\varphi})dx
=\text{Re}\int_{\R^{d-1}\times \R^+}(1+t^2-t^2\Delta)P_t u\overline{\varphi}dx.
$$
Using the Fourier transform this reduces to
$$
\partial_1^2\widehat{P_tu}=\frac{1+t^2\cxip^2}{t^2}\widehat{P_tu}:=\frac{1}{\lambda_t^2}\widehat{P_t u},
$$
hence
$$
\widehat{P_tu}=e^{-x_1/\lambda_t}\widehat{tr(u)}.
$$
The estimate of $J(t,P_tu)$ is then a direct computation : 
$$
\|P_tu\|_2^2=\int_{\R^{d-1}} \frac{\lambda_t}{2}|\widehat{tr(u)}|^2d\xi',
$$
\begin{eqnarray*}
\|P_tu\|_{H^1}^2=\int_{\R^{d-1}}\left(\frac{1}{2\lambda_t}+\frac{\lambda_t}{2}\cxip^2\right)|\widehat{tr(u)}|^2d\xi'
&\sim &\int_{\R^{d-1}}\frac{1+t^2\cxip^2+t^2\cxip^2}{t\sqrt{1+t^2\cxip^2}}|\widehat{tr(u)}|^2d\xi'
\\
&\sim& \int_{\R^{d-1}}\frac{\sqrt{1+t^2\cxip^2}}{t}|\widehat{tr(u)}|^2d\xi'.
\end{eqnarray*}
Hence as expected 
\begin{eqnarray*}
J(t,P_tu)^2&\sim& t\int_{\R^{d-1}}\left(\frac{1}{\sqrt{1+t^2\cxip^2}}+\sqrt{1+t^2\cxip^2}\right)
|\widehat{tr(u)}|^2d\xi'
\\
&\sim &t\int_{\R^{d-1}}\sqrt{1+t^2\cxip^2}|\widehat{tr(u)}|^2d\xi'.
\end{eqnarray*}
\end{proof}

\section{Order for linearisable subspaces and applications}\label{sec:linorder}
The aim of this section is to give a refinement of theorem \ref{th:exactorder} in 
the linearisable case, which is easily applicable to, among other things, the case 
from proposition \ref{prop:JH1}.\\
We start with a simple abstract result which seems new and of independent interest : 
\begin{prop}\label{prop:abstract}
 If $(X_0,X_1)$ is linearisable with linearisation operators $(\Lambda(t))_{0<t\leq 1}$,
 then we have the following norm equivalence on $[X_0,\tX_1]_{\theta,p}$ 
 $$
 \|u\|_{[X_0,\tX_1]_{\theta,p}}\sim \|u\|_{X_{\theta,p}}+
 \int_0^1\left(\frac{N_J(t,\Lambda(t)u)}{t^{\theta}}\right)^p\frac{dt}{t}.
 $$
\end{prop}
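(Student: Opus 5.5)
We prove the two inequalities separately, working with the $K$-functional on $]0,1]$ and writing $X_{\theta,p}=[X_0,X_1]_{\theta,p}$. The $p$-th power in the statement is understood as the $L^p(dt/t)$ norm of $N_J(t,\Lambda(t)u)/t^\theta$.

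\emph{Upper bound for the right-hand side.} Let $u\in[X_0,\tX_1]_{\theta,p}$. We have $\|u\|_{X_{\theta,p}}\lesssim\|u\|_{[X_0,\tX_1]_{\theta,p}}$ because $\tK\geq K$. For the quotient term, fix $t\in]0,1]$ and take a near-optimal decomposition $u=w_0+w_1$ with $w_1\in\tX_1$ and $\|w_0\|_{X_0}+t\|w_1\|_{X_1}\leq 2\tK(t,u)$. Set $v=w_1-\Lambda(t)w_1+\Lambda(t)u$; this is legitimate because $\Lambda(t)$ maps $X_0$ into $X_1$ and $w_1\in X_1$. Then $\Lambda(t)u-v=\Lambda(t)w_1-w_1$, and since $\Lambda(t)u-(\Lambda(t)u-v)=v$ only modifies by an element of $\tX_1$... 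Rather, the cleaner choice is as follows. By linearity, $\Lambda(t)u-w_1=\Lambda(t)w_0-(w_1-\Lambda(t)w_1)$. Using $w_1\in\tX_1$,
\[
N_J(t,\Lambda(t)u)\leq J(t,\Lambda(t)u-w_1)\leq J(t,\Lambda(t)w_0)+J(t,w_1-\Lambda(t)w_1).
\]
By \eqref{boundLambda}, $J(t,\Lambda(t)w_0)\lesssim\|w_0\|_{X_0}$. For the second term, \eqref{boundLambda} gives $\|w_1-\Lambda(t)w_1\|_{X_0}\lesssim t\|w_1\|_{X_1}$. The $X_1$ norm is controlled by $t\|w_1-\Lambda(t)w_1\|_{X_1}\leq t\|w_1\|_{X_1}+t\|\Lambda(t)w_1\|_{X_1}\lesssim t\|w_1\|_{X_1}$. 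Altogether $N_J(t,\Lambda(t)u)\lesssim\tK(t,u)$, and integrating against $t^{-\theta p}dt/t$ gives the first inequality.

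\emph{Reverse inequality.} We build the decomposition of $u$ by hand:
\[
u=\bigl(u-\Lambda(t)u+r(t)\bigr)+\bigl(\Lambda(t)u-r(t)\bigr),
\]
where $r(t)\in X_1$ is chosen with $\Lambda(t)u-r(t)\in\tX_1$ and $J(t,r(t))\leq 2N_J(t,\Lambda(t)u)$. Then
\[
\tK(t,u)\leq\|u-\Lambda(t)u\|_{X_0}+\|r(t)\|_{X_0}+t\|\Lambda(t)u\|_{X_1}+t\|r(t)\|_{X_1}\lesssim K(t,u)+N_J(t,\Lambda(t)u),
\]
using the linearisation property $K(t,u)\sim\|u-\Lambda(t)u\|_{X_0}+t\|\Lambda(t)u\|_{X_1}$. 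Taking the $L^p(t^{-\theta p}dt/t)$ norm on $]0,1]$ gives the reverse bound. The integral over $]0,1]$ suffices because $\tX_1\subset X_0$ continuously.

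\emph{Main obstacle.} The only delicate point is the first direction, where $\Lambda(t)$ need not preserve $\tX_1$. This is handled by subtracting $w_1$ itself instead of $\Lambda(t)w_1$, and using \eqref{boundLambda} on both $w_0$ and $w_1$ as above. One should also check that $t\mapsto N_J(t,\Lambda(t)u)$ is measurable. Alternatively, every integral can be replaced by its discrete analogue on $t=2^{-j}$, in the spirit of Definition \ref{def:interp}, which avoids the issue.
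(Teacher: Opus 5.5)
Your proof is correct and is essentially the paper's argument. The bound $N_J(t,\Lambda(t)u)\lesssim\tK(t,u)$ comes from a near-optimal splitting $u=w_0+w_1$ with $w_1\in\tX_1$, estimating $J(t,\Lambda(t)w_0)+J(t,w_1-\Lambda(t)w_1)$ by \eqref{boundLambda}; the converse comes from the splitting $u=(u-\Lambda(t)u+r(t))+(\Lambda(t)u-r(t))$. These are exactly the paper's two steps, presented in the opposite order. You should delete the abandoned sentence defining $v=w_1-\Lambda(t)w_1+\Lambda(t)u$, which plays no role.
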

\begin{proof}
 Since $\tK\geq K$, it suffices to prove 
$\tK\lesssim K(t,u)+N_J(t,\Lambda(t)u)$ and $N_J(t,\Lambda(t)u)\lesssim \tK(t,u)$.\\
For $u\in X_{\theta,p}$, we choose $v(t)$ in the class of $\Lambda(t)u$ modulo
$\tX_1$  
such that $J(t,v(t))\sim N_J(t,\Lambda(t)u)$ and 
split
$$
u=u-\Lambda(t)u+v(t)+\Lambda(t) u-v(t),
$$
so the first bound is immediate
$$
\tK(t,u)\lesssim \|u-\Lambda(t)u\|_{X_0}+\|v(t)\|_{X_0}+t\|\Lambda(t)u\|_{X_1}
+t\|v(t)\|_{X_1}
\lesssim K(t,u)+J(t,v(t)).
$$
For the second bound, we decompose 
$$u=u-y(t)+y(t),\ y(t)\in \tX_1\text{ and }
\tK(t,u)\sim \|u-y(t)\|_{X_0}+t\|y(t)\|_{X_1}.
$$
Then we have since $y(t)\in \tX_1$, $N_J(t,y(t))=0$, so using \eqref{boundLambda}
\begin{eqnarray*}
N_J(t,\Lambda(t)u)&=&N_J(t,\Lambda(t)(u-y(t))+(\Lambda-I)y)
\\
&\leq& J(t,\Lambda(t)(u-y(t)))+J(t,\Lambda(t) y(t)-y(t))
\\
&\leq& \|u-y(t)\|_{X_0}+t\|y(t)\|_{X_1}\sim \tK(t,u).
\end{eqnarray*}
\end{proof}

When the couple $(X_0,X_1)$ is linearisable, we define a slightly weaker 
version of ``order''.
\begin{define}Let $(X_0,X_1)$ linearisable, $\Lambda$ a linearisation operator.
We say that a closed subspace $\tX_1\subset X_1$ has interpolation order
$\theta_0$ if $X_1/\tX_1$ is embedded\footnote{By embedded, we do not 
necessarily mean inclusion, only the existence of linear injective mapping, and abusively denote $\|u\|_Y$ without mention of the mapping.} in a Banach space $(Y,\|\cdot\|_Y)$ such that 
for any $0<s,t\leq 1,$ $t\sim s$, $u,v\in (X_0)^2$
\begin{equation}\label{eq:equivsubtle}
N_J(t,\Lambda(t)u+\Lambda(s)v)\sim t^{\theta_0}\|\Lambda(t)u+\Lambda(s)v\|_Y,
\end{equation}
 \end{define}

 In these settings, we have the following result, which is both an extension and a 
refinement of theorem \ref{th:exactorder}:
\begin{theo}\label{th:interplin}
Let a couple $(X_0,X_1)$ linearisable with linearisation 
operator $\Lambda$, $\tX_1$ a closed subspace of $X_1$ with interpolation order $\theta_0$ associated to a space $Y$.
Then we have 
$$
[X_0,\tX_1]_{\theta,p}=
\left\{u:\ \|u\|_{[X_0,X_1]_{\theta,p}}+\big\|\|\Lambda(t)u\|_Yt^{\theta_0-\theta}\big\|
_{L^p([0,1],dt/t)}:=\|u\|_{[X_0,\tX_1]_{\theta,p}}<\infty\right\},
$$
and the norm on $[X_0,\tX_1]_{\theta,p}$ is equivalent to $\|\cdot\|_{[X_0,\tX_1]_{\theta,p}}$. 
\\
Moreover, the projection $\Pi:\ X_1\to X_1/\tX_1\hookrightarrow  Y$ extends continuously 
$[X_0,X_1]_{\theta,p}\rightarrow Y$ for $\theta>\theta_0$ and except for the critical case $\theta=\theta_0$, 
the interpolation spaces are the following closed subspaces of $[X_0,X_1]_{\theta,p}$ :
\begin{equation}
[X_0,\tX_1]_{\theta,p}=
\left\{
\begin{array}{ll}
[X_0,X_1]_{\theta,p},\ \theta < \theta_0,\\
\left[X_0,X_1\right]_{\theta,p}\cap \text{Ker}(\Pi),\ \theta>\theta_0.
\end{array}
\right.
\end{equation}

\end{theo}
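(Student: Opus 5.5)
The first claim follows directly from Proposition \ref{prop:abstract} combined with \eqref{eq:equivsubtle} taken with $s=t$ and $v=0$. This gives $N_J(t,\Lambda(t)u)\sim t^{\theta_0}\|\Lambda(t)u\|_Y$ for $0<t\leq 1$. Substituting into the norm equivalence of Proposition \ref{prop:abstract} yields
$$\|u\|_{[X_0,\tX_1]_{\theta,p}}\sim \|u\|_{X_{\theta,p}}+\big\|t^{\theta_0-\theta}\|\Lambda(t)u\|_Y\big\|_{L^p([0,1],dt/t)}.$$
(I read the integral in Proposition \ref{prop:abstract} as its $p$-th root.) This identifies the space with the stated norm for every $\theta$. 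It remains to show that, for $\theta\neq\theta_0$, the extra term is controlled by $\|u\|_{X_{\theta,p}}$ when $\theta<\theta_0$, and by $\|u\|_{X_{\theta,p}}$ on $\text{Ker}(\Pi)$ when $\theta>\theta_0$. I work with the discrete scale $t_j=2^{-j}$. By monotonicity arguments, the integral over $[2^{-j-1},2^{-j}]$ is comparable to the value at $2^{-j}$, and here I would use \eqref{eq:equivsubtle} with $s\sim t$ to compare $\Lambda(t)u$ and $\Lambda(2^{-j})u$.

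The key estimate concerns consecutive differences. Let $w_j=\Lambda(2^{-j})u$. Applying \eqref{eq:equivsubtle} with $t=2^{-j}$, $s=2^{-j-1}$ and the pair $(u,-u)$ gives
$$2^{-j\theta_0}\|w_j-w_{j+1}\|_Y\sim N_J(2^{-j},w_j-w_{j+1})\leq J(2^{-j},w_j-w_{j+1})\lesssim K(2^{-j},u).$$
The last inequality holds because $w_j-w_{j+1}=(u-w_{j+1})-(u-w_j)$. By \eqref{boundLambda} and $K(2^{-j-1},u)\leq K(2^{-j},u)$, both the $X_0$ norm and $2^{-j}$ times the $X_1$ norm are bounded by $K(2^{-j},u)$. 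This is exactly the role of allowing two different parameters in the definition, and I regard it as the crux.

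For $\theta<\theta_0$ I telescope from $j=0$, writing $w_j=w_0+\sum_{k<j}(w_{k+1}-w_k)$. Then
$$2^{j(\theta-\theta_0)}\|w_j\|_Y\lesssim 2^{j(\theta-\theta_0)}\|w_0\|_Y+\sum_{k<j}2^{-(j-k)(\theta_0-\theta)}2^{k\theta}K(2^{-k},u).$$
For the first term, $\|w_0\|_Y\sim N_J(1,w_0)\lesssim\|u\|_{X_0}\lesssim \|u\|_{X_{\theta,p}}$. Young's inequality then gives the bound, exactly as in the proof of Theorem \ref{th:exactorder}.

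For $\theta>\theta_0$ I first extend $\Pi$. The differences satisfy $\|w_j-w_{j+1}\|_Y\lesssim 2^{j\theta_0}K(2^{-j},u)$, and $2^{j\theta_0}K(2^{-j},u)=2^{-j(\theta-\theta_0)}\,2^{j\theta}K(2^{-j},u)$ is summable. Hence $(w_j)$ is Cauchy in $Y$, and I set $\Pi u:=\lim_j w_j$ in $Y$. For $u\in X_1$ this agrees with the usual $\Pi$, since $\|w_j-u\|_Y\lesssim 2^{j\theta_0}J(2^{-j},w_j-u)\to 0$, using $N_J(t,\cdot)\sim t^{\theta_0}\|\cdot\|_Y$ on $X_1$ together with \eqref{boundLambda}. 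Some care is needed here: \eqref{eq:equivsubtle} is stated only on ranges of $\Lambda$, so I would apply it with $\Lambda(2^{-j})u$ and $\Lambda(2^{-j})(u-w_j)$, or use the density of such elements. Continuity $\|\Pi u\|_Y\lesssim\|u\|_{X_{\theta,p}}$ follows from the same sum together with $\|w_0\|_Y\lesssim\|u\|_{X_0}$. If $\Pi u=0$, then $w_j=\sum_{k\geq j}(w_k-w_{k+1})$, which gives
$$2^{j(\theta-\theta_0)}\|w_j\|_Y\lesssim\sum_{k\geq j}2^{-(k-j)(\theta-\theta_0)}2^{k\theta}K(2^{-k},u),$$
and Young's inequality concludes. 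Conversely, $[X_0,\tX_1]_{\theta,p}\subset\text{Ker}(\Pi)$ because $\Pi$ is continuous on $X_{\theta,p}$ and vanishes on $\tX_1$, which is dense in $[X_0,\tX_1]_{\theta,p}$ for $p<\infty$. For $p=\infty$ I would instead use the finiteness of the weighted norm, which forces $\|w_j\|_Y\to0$. Closedness then follows in both cases from the equivalence of norms. The main technical obstacle is to justify the use of \eqref{eq:equivsubtle} for the limits and to make sure the constant is uniform in $j$.
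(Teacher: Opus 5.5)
Your proposal is correct and follows essentially the paper's route. You use Proposition \ref{prop:abstract} with \eqref{eq:equivsubtle} for the norm, and the two-parameter form of \eqref{eq:equivsubtle} to get $\|\Lambda(2^{-j})u-\Lambda(2^{-j-1})u\|_Y\lesssim 2^{j\theta_0}K(2^{-j},u)$. You then telescope from $j=0$ when $\theta<\theta_0$, or from infinity on $\text{Ker}(\Pi)$ when $\theta>\theta_0$, and conclude with Young's inequality; the only differences are cosmetic, such as bounding $\|\Lambda(1)u\|_Y\lesssim\|u\|_{X_0}$ instead of normalizing $\Lambda(1)=0$.

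The step you leave open is showing that the limit map agrees with $\Pi$ on $X_1$. The paper skips it too, and it really does need an estimate like $t^{\theta_0}\|v\|_Y\lesssim N_J(t,v)$ for all $v\in X_1$, which the definition only provides on ranges of $\Lambda$. The identification does not depend on it, though: your weighted-norm argument, which you reserve for $p=\infty$, works for every $p$ and shows directly that $\|\Lambda(2^{-j})u\|_Y\to0$ for $u\in[X_0,\tX_1]_{\theta,p}$.
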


\begin{proof}
The norm equivalence is a direct application of proposition \ref{prop:abstract}.\\
To extend the projection, note that since $X_1\subset X_0$, we may assume 
$\Lambda(2^{0})=0$, and decompose $u\in X_{\theta,p}$ as 
$$
u=\sum_0^\infty \Lambda(2^{-j-1},u)-\Lambda(2^{-j},u).
$$
Then 
$$
\|\Lambda(2^{-j-1})u-\Lambda(2^{-j})u\|_Y\lesssim 2^{j\theta_0} 
J(2^{-j},\Lambda(2^{-j-1})u-\Lambda(2^{-j})u)\lesssim 2^{j\theta_0}K(2^{-j},u),
$$
hence if $\theta>\theta_0$, 
$\di \sum_0^\infty \|\Lambda(2^{-j-1})u-\Lambda(2^{-j})u\|_Y$ converges.
\vspace{4mm}\\
We deduce now the identification in the special cases $\theta>\theta_0$, $\theta<\theta_0$. 
The inclusion $[X_0,X\tX_1]_{\theta,p}\subset [X_0,X_1]_{\theta,p}$ 
is clear. For $\theta>\theta_0$ the 
inclusion $[X_0,\tX_1]_{\theta,p}\subset [X_0,X_1]_{\theta,p}\cap \text{Ker}(\Pi)$ follows
from the fact that $\Pi$ is well defined on 
$X_{\theta,p}$ : indeed we can write any $u\in [X_0,\tX_1]_{\theta,p}$ as 
$u=\sum_0^\infty v_j$ with convergence in $Y$, and each $v_j\in \text{Ker}(\Pi)$, so by continuity $u\in \text{Ker}(\Pi)$.
\\
First we reformulate the integral in a discrete version :
\begin{eqnarray*}
\int_0^1
\left(\frac{\|\Lambda(t))u\|_Y}{t^{\theta-\theta_0}}\right)^p\frac{dt}{t}
&\sim &\sum_0^\infty \|\Lambda(2^{-j})u\|_Y^p2^{jp(\theta-\theta_0)}
\\
&=&\|\|\Lambda(2^{-j})u\|_Y 2^{j(\theta-\theta_0)}\|_{l^p}.
\end{eqnarray*}
In the case  $\theta>\theta_0$,  assume $u\in X_{\theta,p}\cap Ker(\Pi)$, so that (in $Y$)
$$\Lambda(2^{-j})u=\sum_{k\geq j} (\Lambda(2^{-k})-\Lambda(2^{-k-1}))u,$$
this gives the bound
\begin{eqnarray}
\label{ineq:telesc}
\|\Lambda(2^{-j})\|_Y&\leq& \sum_{k\geq j} \|(\Lambda(2^{-k})-\Lambda(2^{-(k+1)})u\|_Y
\\
\nonumber&\lesssim& \sum_{k\geq j} 2^{k\theta_0}J(2^{-k},(\Lambda(2^{-k})-\Lambda(2^{-(k+1)})u)
\\
\nonumber
&\sim& \sum_{k\geq j} 2^{k(\theta_0-\theta)} 2^{k\theta}K(2^{-k},u).
\end{eqnarray}
Young's inequality implies :
$$
\|\|\Lambda(2^{-j})u\|_Y 2^{j(\theta-\theta_0)}\|_{l^p}
\lesssim \|2^{(k-j)(\theta_0-\theta)}\|_{l^1}\|2^{k\theta}K(2^{-k},u)\|_p\sim 
\|u\|_{X_{\theta,p}},
$$
which gives $u\in [X_0,\tX_1]_{\theta,p}$.
\\
The proof for $\theta<\theta_0$ is similar, the only difference being that 
at step \eqref{ineq:telesc} we decompose (as in the proof of theorem \ref{th:exactorder})
$\Lambda(2^{-j})u=\sum_{k=0}^{j-1} \left(\Lambda(2^{-(k+1)})-\Lambda(2^{-k})\right)u$, 
details are left to the reader.
\end{proof}
\paragraph{Example 1 : $[L^2,H^1_0]$} 
We come back to the standard example $X_0=L^2,\ X_1=H^1(\R^{+*}\times \R^{d-1}),\  
\tX_1=H^1_0$. The natural candidate for the $Y$ space is 
$L^2(\R^{d-1})$, and we naturally embed $X_1/\tX_1$ into $Y$ thanks to the trace 
operator.
\\
We will show how to rapidly derive the classical result 
$$
\|u\|_{H^{1/2}_{00}}\sim \|u\|_{H^{1/2}}+\|u/x_1^{1/2}\|_{L^2}.
$$
We recall the notation $x=(x_1,x')$, and similarly $\xi=(\xi_1,\xi')$, 
we denote $\hat{\cdot}$ the Fourier transform on $L^2(\R^d)$ and $\cF'$ 
the one on $L^2(\R^{d-1})$.\\
For $\Lambda(t)$, we take 
$$
\Lambda(t)u=\cF^{-1}\left(1_{|\xi'|\leq 1/t}\frac{1}{1+t^2\xi_1^2}\widehat{Eu}
\right)
,\ Eu
\text{ even extension on }\R^d.
$$
It is easily seen that $\Lambda$ linearizes the couple $(L^2,H^1)$ : for $0\leq t\leq 1$,
\begin{eqnarray*}
t^2\|\Lambda(t)u\|_{H^1}^2=\int_{\R^d}1_{|\xi'|\leq 1}\frac{t^2\cxi^2}{(1+t^2\xi_1^2)^2}|\widehat{Eu}|^2d\xi
&\lesssim& \int_{\R^d}\frac{t^2\la\xi_1\ra^2}{(1+t^2\xi_1^2)^2}|\widehat{Eu}|^2d\xi
\\
&\lesssim& \min(t^2\|u\|_{H^1}^2,\|u\|_2^2),
\end{eqnarray*}
\begin{eqnarray*}
\|(1-\Lambda(t))u\|_2^2&=&\int_{\R^d}\left(\frac{1+t^2\xi_1^2
-1_{|\xi'|\leq 1/t}}{1+t^2 \xi_1^2}\right)^2
|\widehat{Eu}|^2d\xi
\\
&=&\int_{\R^d}\left(\frac{1_{|\xi'|\geq 1/t}+t^2\xi_1^2}
{1+t^2 \xi_1^2}\right)^2
|\widehat{Eu}|^2d\xi
\\
&\leq& \min(t^2\|u\|_{H^1}^2,\|u\|_2^2).
\end{eqnarray*}
From proposition \ref{prop:JH1} we have 
$$
N_J(t,\Lambda(t)u)\sim t^{1/2}\left(\int_{\R^{d-1}}\sqrt{1+t^2|\xi'|^2}|\widehat{\text{tr}(\Lambda(t)u)}|^2\,d\xi'\right).
$$
Since $\cF'(\text{tr}(\Lambda(t)u))(\xi')=
\frac{1}{2\pi}\int_\R\widehat{\Lambda(t)u}(\xi_1,\xi')d\xi_1$, we have that
$\widehat{\text{tr}(\Lambda(t)u)}$ is supported in $|\xi'|\lesssim 1/t$, hence
we see that condition \eqref{eq:equivsubtle} is satisfied
$$
N_J(t,\Lambda(t)u)\sim t^{1/2}\left(\int_{\R^{d-1}}|\widehat{\text{tr}(\Lambda(t)u)}|^2\,d\xi'\right)=t^{1/2}\|\Lambda(t)u\|_{Y}.
$$
The identification of $[L^2,H^1_0]_{1/2,2}$ is then derived : denote
$v_{<1/t}(x_1,x')$ the function defined by 
$$
\cF'(v_{1/t})(x_1,\xi'))=
1_{|\xi'|<1/t}\cF'(v)(x_1,\xi').$$
Using $\cF^{-1}(1/(1+\xi_1^2))=e^{-|x_1|}/2$, we have
$$
2\text{tr}(\Lambda(t)u)(x')=\int_0^\infty u_{<1/t}(x_1,x')\frac{e^{-|x_1|/t}}{t}\,dx_1,
$$
then 
\begin{eqnarray*}
\|2\text{tr}(\Lambda(t)u)\|_{L^2(dx'dt/t)}^2&=&\int_0^1\int_{\R^{d-1}}
\int_0^\infty\int_0^\infty u_{<1/t}(y,x')\overline{u_{<1/t}}(z,x')dydzdx'e^{-(y+z)/t}\frac{dt}{t^3}
\\
&\leq&\int_0^1\int_0^\infty\int_0^\infty \frac{\|u_{<1/t}(y,\cdot)\|_2
\|u_{<1/t}(z,\cdot)\|_2}{t^3}e^{-(y+z)/t}dt
\\
&\leq& \int_0^1\int_0^\infty\int_0^\infty \frac{\|u(y,\cdot)\|_2
\|u(z,\cdot)\|_2}{t^3}e^{-(y+z)/t}\,dydzdt
\\
&=&\int_0^\infty\int_0^\infty \frac{\|u(y,\cdot)\|_2
\|u(z,\cdot)\|_2}{(y+z)^2}\,dydz
\end{eqnarray*}
Splitting the integral as $z<y$ and $y<z$, and using the fractional Hardy inequality 
we bound
\begin{eqnarray*}
\int_0^\infty\frac{\|u(y,\cdot)\|_2}{y^{1/2}}
\frac{1}{y^{3/2}}
\int_0^y\frac{\|u(z,\cdot)\|_2}{(1+z/y)^2}\,dzdy
&\leq& \|\|u(y,\cdot)\|_2/y^{1/2}\|_{L^2(\R^+)}
\left\|\frac{1}{y^{3/2}}\int_0^y\|u(z,\cdot)\|_2dz\right\|_2
\\
&\lesssim& \|\|u(y,\cdot)\|_2/y^{1/2}\|_{L^2(\R^+)}^2.
\end{eqnarray*}
This proves that the $[L^2,H^1_0]_{1/2,2}$ norm is controlled 
by $\|u\|_{H^{1/2}}+\|u/x_1^{1/2}\|_2$, the reverse inequality follows from the
usual Hardy inequality $\|u/x_1\|_{L^2}\lesssim \|\partial_1 u\|_{L^2}$ and 
an interpolation argument.\\
For completeness, we recall that the fractional Hardy inequality is a consequence 
of Minkowski's inequality
\begin{eqnarray*}
\|y^{-3/2}\int_0^yf(z)dz\|_2&=&\|y^{-1/2}\int_0^1f(ys)ds\|_2
\\
&\leq &\int_0^1\left(\int_0^\infty\frac{f^2(ys)}{y}dy\right)^{1/2}ds
\\
&=&\int_0^1\left(\int_0^\infty\frac{f^2(x)}{x}dx\right)^{1/2}ds=\|f/x^{1/2}\|_2.
\end{eqnarray*}
\paragraph{Example 2 : anisotropic Sobolev spaces}
Take $X_0=H^{1/2,2}(\R^{+*}_{s}\times\R^{d-1})\times L^2(\R^{+*}_{x_1}\times\R^{d-1})$, $X_1=H^{3/2,2}(\R^{+*}_{s}\times\R^{d-1})\times H^1(\R^{+*}_{x_1}\times\R^{d-1})$, where 
$$
\|u\|_{H^{\theta,2}(\R^d)}^2=\int_{\R^d}(1+|\xi|^2+|\delta|)^\theta|\widehat{u}(\xi,\delta)|^2
\,d\xi\,d\delta,
$$
and functions in $H^{\theta,2}(\R^{+*}_s\times \R^{d-1})$ are restrictions of functions 
in $H^{\theta,2}(\R^{d})$.\\
We set $\tX_1=\{(g,u)\in X_1:\ u|_{x_1=0}-g|_{s=0}=0\}$. 
\\
It is convenient to distinguish the domains $\R^{+*}_{x_1}\times\R^{d-1}$
and $\R^{+*}_s\times\R^{d-1}$, to do so we denote (as indicated by the notation) 
their respective first variable $x_1$ and $s$.
\\
We shall prove here that if 
\begin{equation}\label{eq:interpaniso}
(g,u)\in H^{1,2}\times H^{1/2}\text{ and } \int_0^1\|u(t,x')-g(t^2,x')\|_{L^2(dx')}^2
\frac{dt}{t}<\infty,\text{ then }(g,u)\in [X_0,\tX_1]_{1/2,2}.
\end{equation}
We take $(Y,\|\cdot\|_Y)=(L^2(\R^{d-1}),\|\cdot\|_2)$, and the embedding 
$X_1/\tX_1$ into $Y$ is given by the map $(g,u)\to (g|_{s=0}-u|_{x_1=0})$.
A reminder about trace in anisotropic Sobolev spaces is given just after the next 
paragraph.
\\
Interpolation problems with anisotropic Sobolev spaces appear naturally in the analysis of 
initial boundary value problems where time differentiation is of order one, while 
spatial derivatives appear at order two, eg parabolic partial differential equations 
(see Amann \cite{Amann2}). This specific example was considered by the author 
for the Schr\"odinger equation on a domain \cite{Audiard6}, where the 
critical case of identifying $[X_0,\tX_1]_{1/2,2}$ turned out to be essential.\\
Define $X_\theta=H^{\theta+1/2,2}\times H^\theta$, the trace map $(g,u_0)\to (g|_{s=0},\ u_0|_{x_1=0})$
sends $X_\theta$ to $H^{\theta-1/2}\times H^{\theta-1/2}$ for $\theta>1/2$. More precisely, 
we have the standard trace estimate
\begin{equation}
\label{eq:anisotrace}
\forall\,g\in H^{3/2,2},\ \|g|_{t=0}\|_2\lesssim \|g\|_{H^{1/2,2}}^{1/2}\|g\|_{H^{3/2,2}}^{1/2}.
\end{equation}
\begin{equation}
\label{eq:isotrace}
\forall\,u\in H^{1},\ \|u_0|_{x_1=0}\|_2\lesssim \|u_0\|_{L^2}^{1/2}\|u_0\|_{H^{1}}^{1/2}.
\end{equation}
This implies that the infimum $N_J(t,(g,u))=\inf_{(h,v)\in \tX_1} J(t,(g-h,u-v))$, is bounded from below by 
\begin{equation*}
N_J(t,(g,u))\gtrsim t^{1/2}\inf_{(h,v)\in \tX_1}\left(\|(g-h)|_{s=0}\|_{L^2}+\|(u-v)|_{x_1=0}\|_{L^2}\right).
\end{equation*}
On the other hand the infimum
$$
\inf_{\varphi} \int_{\R^{d-1}}|\widehat{\text{tr}(u)}-\widehat{\varphi}|^2+
|\widehat{\text{tr}(g)}-\widehat{\varphi}|^2\,d\xi,
$$
is attained for $\varphi=(\text{tr}(u)+\text{tr}(g))/2$, hence 
\begin{equation}
\label{order:upper}
t^{1/2}\|\text{tr}(u)-\text{tr}(g)\|_2\lesssim N_J(t,u).
\end{equation}
To check the opposite inequality, we define a lifting 
$R:\ H^{1/2}(\R^{d-1})\to H^{3/2,2}(\R_s\times \R^{d-1})$ with some parameter $\lambda$ to choose later
$$
\cF_{x'}R\varphi(\xi',s)=e^{-\lambda |s|}\widehat{\varphi}(\xi'), \ 
\widehat{R\varphi}(\xi',\delta)=\frac{1}{\lambda(1+(\delta/\lambda)^2)}
\widehat{\varphi}(\xi').
$$
The $H^{s,2}$ norm of $R\varphi$ is 
\begin{eqnarray*}
\iint_{\R^d} (1+|\xi'|^2+|\delta|)^s\frac{1}{\lambda^2(1+(\delta/\lambda)^2)^2}
|\widehat{\varphi}(\xi')|^2\, d\xi'\, d\delta
=\iint_{\R^d} (1+|\xi'|^2+\lambda |\delta|)^s&\di \frac{1}{\lambda(1+(\delta)^2)^2}
\\
&\times|\widehat{\varphi}(\xi')|^2\, d\xi'\, d\delta,
\end{eqnarray*}
so 
$$
\|R\varphi\|_{H^{1/2,2}}^2+t^2\|R\varphi\|_{H^{3/2,2}}=
\iint_{\R^d} \frac{\left((1+|\xi'|^2+\lambda |\delta|)^{1/2}+t^2(1+|\xi'|^2+\lambda|\delta|)^{3/2}
\right)}{\lambda(1+\delta^2)^2}|\widehat{\varphi}(\xi')|^2\,d\xi'\,d\delta.
$$
Take $\lambda=(1+t^2|\xi'|^2)/t^2$, we get the bound 
\begin{eqnarray}\nonumber
J(t,R\varphi)^2&\lesssim &\iint_{\R^d} \frac{((1+t^2|\xi'|^2)(1+|\delta|))^{1/2}+t^2((1+t^2|\xi'|^2)(1+|\delta|))^{3/2}}{(1+t^2|\xi'|^2)(1+\delta^2)^2}|\widehat{\varphi}|^2 t^2\,d\xi'\,d\delta
\\
\label{order:lower}
&\lesssim& t\iint_{\R^d}\frac{((1+t^2|\xi'|^2)(1+|\delta|))^{3/2}}{(1+t^2|\xi'|^2)(1+\delta^2)^2}|\widehat{\varphi|^2}d\xi'\,d\delta
\\
&\sim& t \int(1+t^2|\xi'|^2)|\widehat{\varphi}|^2d\xi'.
\end{eqnarray}
As for the homogeneous case, using 
$$
\Lambda(t)(g,u)=(\cF')^{-1}\left(1_{|\xi'|<1/t}\frac{\widehat{g}(\delta,\xi')}{1+t^4\delta^2},
1_{|\xi'|<1/t}\frac{\widehat{u}(\xi)}{1+t^2\xi_1^2}\right),
$$
the bounds \eqref{order:upper},\eqref{order:lower}, and proposition \ref{prop:JH1},
we deduce that we have
$$
N_J(t,\Lambda(t)(u,g))\sim t^{1/2}\|\Lambda(t)(u,g)\|_Y,
$$
namely \eqref{eq:equivsubtle} with $\theta_0=1/2$.
\\
We use as in the previous example the subscript $\widehat{u}_{<1/t}=1_{|\xi'|\leq 1/t}
\widehat{u}$.  
To evaluate the norm $N_J$, take the difference of the traces, 
$$
\int_{\R^+}u_{<t}(x)\frac{e^{-x_1/t}}{t}dx_1-\int_{\R^+}g_{<t}(s,x')\frac{e^{-s/t^2}}{t^2}\,ds
=\int_{\R^+}(u_{<t}(yt,x')-g_{<t}(yt^2,x'))e^{-y}dy.
$$
Take the $L^2(dx')$ norm : thanks to Minkowski's inequality and the boundedness 
of $1_{|\xi'|<1/t}$, we are reduced to bound the $L^2(dt/t)$ norm of 
$\int_{\R^+}e^{-y}\|u(yt,\cdot)-g(yt^2,\cdot)\|_{L^2(dx')}dy$. In the remainder of the proof, 
for conciseness, we shall drop this norm and simply write $|u(yt)-g(yt^2)|$ 
instead of $\|u(yt,\cdot)-g(yt^2,\cdot)\|_{L^2(dx')}$. With these notations :
\begin{eqnarray*}
\|N_J(t,\Lambda(t)(g,u))\|_{L^2(dt/t)}^2&\leq &
\int_0^1\int_{\R^+}\int_{\R^+}|u(yt)-g(yt^2)|e^{-(x+y)}
|u(xt)-g(xt^2)|dydx\frac{dt}{t}
\\
&\leq &\iint_{(\R^+)^2}\|u(yt)-g(yt^2)\|_{L^2(dt/t)}
\|u(xt)-g(xt^2)\|_{L^2(dt/t)}e^{-(x+y)}dxdy.
\end{eqnarray*}
We can evaluate 
\begin{eqnarray*}
\|u(yt)-g(yt^2)\|_{L^2(dt/t)}&=&\|u(s)-g(s^2/y)\|_{L^2(ds/s)}
\\
&\leq &\|u(s)-g(s^2)\|_{L^2(ds/s)}+\|g(s^2)-g(s^2/y)\|_{L^2(ds/s)}.
\end{eqnarray*}
The property \eqref{eq:interpaniso} is true if  the second term is finite for 
$g\in H^{1,2}$. 
Actually we have the more precise estimate
\begin{equation}\label{eq:bornedilatation}
\int_0^\infty\int_0^\infty (g(s^2)-g(s^2/y))^2\frac{ds}{s}e^{-y}dy
\lesssim \|g\|_{H^{1/2}(\R_t^+)}^2.
\end{equation}
To check this, first the change of variable $x=s^2$ reduces to prove 
$$
\int_0^\infty\int_0^\infty (g(x)-g(x/y))^2\frac{dx}{x}e^{-y}dy\lesssim \|g\|_{H^{1/2}}^2.
$$ 
To do so we
use an interpolation argument between $L^2$ and $H^1$, indeed from the triangular 
inequality
\begin{equation}\label{eq:dilat0}
\int_0^\infty\int_0^\infty (g(x)-g(x/y))^2\,dx\,e^{-y}dy\lesssim \int_0^\infty 
e^{-y}(\|g\|_2^2+y\|g\|_2^2)dy\lesssim \|g\|_2^2.
\end{equation}
On the other hand consider $\di \int_0^\infty \int_0^\infty \frac{(g(x)-g(x/y))^2}{x^2}dx\,e^{-y}dy$ : the integral over $x\in [1,\infty[$ is easy to bound, the integral on 
$[0,1]$ is bounded using Taylor's formula and Minkowski's inequality as follows :
\begin{eqnarray}\nonumber
 \int_0^\infty \int_0^1\frac{(g(x)-g(x/y))^2}{x^2}dx\,e^{-y}dy
 &=&\int_0^\infty \int_0^1 \frac{(\int_0^1g'(x+tx(1/y-1)x(\frac{1}{y}-1)dt)^2}{x^2}dx\,e^{-y}dy
 \\
 \nonumber
 &=&\left\|\int_0^1g'(x+tx(1/y-1))|^2
 \left(\frac{1}{y}-1\right) \right\|_{L^2(e^{-y}dydx)}
 \\
 \nonumber
 &\leq& \left(\int_0^1\left(\int_0^\infty\int_0^1 |g'(x+tx(1/y-1))|^2
 \left(\frac{1}{y}-1\right)^2dxe^{-y}dy\right)^{1/2}dt\right)^2
 \\
 \nonumber
 &=&\left(\int_0^1\left(\int_0^1\int_0^\infty |g'(x+tz)|^2\left(\frac{z}{1+z}\right)^2e^{-1/(1+z)}dzdx\right)^{1/2}dt\right)^2
 \\
 \nonumber
 &\lesssim& \left(\int_0^1\left(\int_0^1\frac{\|g'\|_2^2}{t}dx\right)^{1/2}dt\right)^{2}
 \\
 &\lesssim& \|g'\|_2^2.
 \label{eq:dilat1}
\end{eqnarray}
Combining \eqref{eq:dilat0},\eqref{eq:dilat1} we obtain 
\eqref{eq:bornedilatation} by interpolation. This implies the required bound 
$$
\|N_J(t,\Lambda(t)(g,u)\|_{L^2(dt/t)}\lesssim \|u(s,x')-g(s^2,x')\|_{L^2(dx'ds/s)}
+\|(g,u)\|_{H^{1,2}\times H^{1/2}}.
$$

\paragraph{Example 3: Codimension $2$ trace} Take $X_0=L^2(\R^d)$, 
$X_1=H^2(\R^d)$, $d\geq 2$, $\tX_1=\{u\in H^2:\ u(0,0,x')=0,\ x'\in \R^{d-2}\}$
(this problem appears in anisotropic settings in \cite{Audiard6}, and the 
result presented here was only conjectured). 
\\
We will prove a property very similar to the first example: 
\begin{equation}\label{interp:point}
u\in H^1\text{ and }\int_{\R^{2}}\frac{\|u(x_1,x_2,\cdot)\|_{L^2(\R^{d-2})}}
{x_1^2+x_2^2}dx_1dx_2\Rightarrow u\in [X_0,\tX_1]_{1/2}. 
\end{equation}
With similar arguments as before, it can be checked that $\tX_1$ has interpolation 
order $1/2$ with $Y=L^2(\R^{d-2})$ (and $\|u\|_{Y}=\|u(0,0,\cdot)\|_{L^2(\R^{d-2})}$),
and linearisation operator
$$
\Lambda(t)u=\frac{e^{-(|x_1|+|x_2|)/t)}}{t^2}*u_{<1/t},\
\cF'u_{<1/t}=\cF'(u)(x_1,x_2,\xi')1_{|\xi'|<1/t}.
$$
We have
\begin{eqnarray*}
\|\Lambda(t)u(0,\cdot)\|_{L^2(dt/t)}^2=\int_0^1\int_{\R^{d-2}}\int_{\R^2}\int_{\R^2}
\frac{1}{t^4}e^{-(|x_1|+|x_2|+|y_1|+|y_2|)/t}u(x_1,x_2,x')u(y_1,y_2,x')\\
\,dx_1\,dx_2\,dy_1\,dy_2\,dx'dt.
\end{eqnarray*}
Skipping computations similar to the previous examples, we find eventually 
the bound
\begin{eqnarray*}
\|\Lambda(t)u(0)\|_{L^2(dt/t)}^2\lesssim 
\int_{\R^4}\frac{\|u(x_1,x_2,\cdot)\|_{L^2(\R^{d-2}}\|u(y_1,y_2,\cdot)\|_2}{(|(x_1,x_2)|
+|(y_1,y_2)|)^4}dx_1dx_2dy_1dy_2,
\end{eqnarray*}
with of course $|(x_1,x_2)|=\sqrt{x_1^2+x_2^2}$.\\
For conciseness, let us denote $\|u(x_1,x_2,\cdot)\|_{L^2(\R^{d-2}}=|u(x)|$, and 
$|x|=\sqrt{x_1^2+x_2^2}$. Splitting 
the integral in two domains $\{|y|<|x|\}\cup \{|y|\geq |x|\}$, and by symmetry, 
we have the bound
\begin{eqnarray*}
\|\Lambda(t)u(0)\|_{L^2(dt/t)}^2\lesssim 
 \int_{\R^2}\frac{|u(x)|}{|x|}\frac{1}{|x|^3}\int_{|y|<|x|}|u(y)|dydx,
\end{eqnarray*}
hence \eqref{interp:point} boils down to the Hardy type inequality 
\begin{equation}
\left\| \frac{1}{|x|^3}\int_{|y|<|x|}|u(y)|dydx\right\|_2\lesssim 
\|u/|x|\|_2.
\end{equation}
To prove the latter, it is simplest to use polar coordinates in complex form 
$x=re^{i\varphi}$, $r>0,\ \varphi\in [0,2\pi[$,
\begin{eqnarray*}
\left\| \frac{1}{|x|^3}\int_{|y|<|x|}|u(y)|dy\right\|_2&\leq&
\left\| \frac{1}{|x|}\int_{B(0,1)}|u(|x|z)|dz\right\|_2
\\
&\leq& \int_{B(0,1)}\left(\int_{\R^2}\frac{u^2(|x|z)}{|x|^2}dx\right)^{1/2}dz
\\
&=&\int_{B(0,1)}\left(2\pi \int_0^\infty\frac{u^2(rz)}{r}dr\right)^{1/2}dz
\\
&=&\int_0^1\int_0^{2\pi}\left(2\pi\int_0^\infty \frac{u^2(rse^{i\varphi})}{r}dr\right)^{1/2}s\,ds\,d\varphi
\\
&\lesssim& \int_0^{2\pi}\left(\int_0^\infty\frac{u^2(\rho e^{i\varphi})}{\rho}d\rho\right)^{1/2}d\varphi
\\
&\lesssim& \left(\int_0^{2\pi}\int_0^\infty\frac{u^2(\rho e^{i\varphi})}{\rho}\,d\rho\,d\varphi\right)^{1/2}
=\|u/|x|\|_{L^2}.
\end{eqnarray*}
\section{Intersection of independent subspaces}\label{sec:indep}
We consider here the case where $\tX_1\subset X_1$ is an intersection $\di \cap_{k=1}^n X_1^k$,
with $X_1^k$ closed subspaces of $X_1$. This part is a quite straightforward 
extension of the notion of ``independent subspace'' from L\"ofstr\"om \cite{LofsSub}, that we 
detail for the convenience of the reader :
\begin{define}
Let $(X_1^k)_{1\leq k\leq n}$ with respective interpolation orders $\theta_1<\theta_2\cdots 
<\theta_n$, associated to spaces $Y_1,\cdots,Y_n$. They are independent when for any $1\leq k\leq n$, 
$$
N_J(t,\Lambda(t)u+\Lambda(s) v)\sim \sum_{k=1}^nt^{\theta_k}\|\Lambda(t)u+\Lambda(s)v\|_{Y_k}.
$$
\end{define}
\begin{rmq}
Our definition of independence does not look the same as in \cite{LofsSub}, but it is 
very close to be an infinite dimensional variant of the equivalent notion of 
``supporting sequence'' 
(definition $2$ in \cite{LofsSub}) : for spaces defined as kernels $X_1^k=\text{Ker}(\Gamma_k)$, 
a supporting sequence is a family $(w_k)_{1\leq k\leq n}$
such that 
$$
\Gamma_j(w_k)=\delta_k^j\text{ and }J(t,w_k)\lesssim t^{\theta_k}.
$$
\end{rmq}

\begin{theo}
Let $\tX_1=\cap_{k=1}^nX_1^k$, where $(X_1^k)_{1\leq k\leq n}$ are independent with 
respective interpolation orders $(\theta_k)_{1\leq k\leq n}$.
 \\
 For $\theta\notin \{\theta_j,\ 1\leq j\leq n\}$, let $k$ the largest integer 
 such that $\theta_k<\theta$ : for $j\leq k$, 
 the projection $\Pi_j:\ X_1\to X_1/X_1^j$ extends 
 $X_{\theta,p}\to Y_j$, and we have 
 $$
[ X_0,\tX_1]_{\theta,p}=X_{\theta,p}\cap \left(\bigcap_{j\leq k}\text{Ker}
\, \Pi_j\right).
 $$
 If there exists  $k$ such that $\theta=\theta_k$, then 
 $$
 [ X_0,\tX_1]_{\theta,p}=\left\{u\in X_{\theta,p}\cap \left(\bigcap_{j<k}\text{Ker}(\Pi_j)\right):\ 
 \|\|\Lambda(t)u\|_{Y_k}\|_{L^p(dt/t)}<\infty\right\}.
 $$
\end{theo}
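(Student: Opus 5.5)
The plan is to reduce everything to Proposition \ref{prop:abstract} and then treat each order $\theta_j$ separately, exactly as in the proof of Theorem \ref{th:interplin}. By Proposition \ref{prop:abstract},
$$
\|u\|_{[X_0,\tX_1]_{\theta,p}}\sim \|u\|_{X_{\theta,p}}+\big\|t^{-\theta}N_J(t,\Lambda(t)u)\big\|_{L^p([0,1],dt/t)},
$$
where $N_J$ is the quotient norm modulo $\tX_1$. The independence assumption, applied with $v=0$, gives
$$
N_J(t,\Lambda(t)u)\sim\sum_{j=1}^n t^{\theta_j}\|\Lambda(t)u\|_{Y_j}.
$$
Hence
$$
\|u\|_{[X_0,\tX_1]_{\theta,p}}\sim \|u\|_{X_{\theta,p}}+\sum_{j=1}^n\big\|t^{\theta_j-\theta}\|\Lambda(t)u\|_{Y_j}\big\|_{L^p(dt/t)}.
$$
The problem therefore splits into $n$ separate one-subspace problems, one for each $j$.

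For each $j$ I estimate the dyadic differences. Applying independence with $u$ and $s=t/2$, $v=-u$ (so $s\sim t$), and using that the $\sum_j$ dominates each term, I get
$$
2^{-k\theta_j}\|(\Lambda(2^{-k})-\Lambda(2^{-k-1}))u\|_{Y_j}\lesssim N_J(2^{-k},(\Lambda(2^{-k})-\Lambda(2^{-k-1}))u)\le J(2^{-k},(\Lambda(2^{-k})-\Lambda(2^{-k-1}))u)\lesssim K(2^{-k},u),
$$
where the last step uses \eqref{boundLambda}. This is exactly the input used in Theorem \ref{th:interplin}. Its consequences are as follows.

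\begin{enumerate}
\item For $\theta_j<\theta$, the series $\sum_k(\Lambda(2^{-k-1})-\Lambda(2^{-k}))u$ converges in $Y_j$, so $\Pi_j$ extends continuously $X_{\theta,p}\to Y_j$.
\item If moreover $u\in\text{Ker}\,\Pi_j$, the tail telescoping sum $\Lambda(2^{-m})u=\sum_{k\ge m}$ together with Young's inequality with kernel $(2^{-\ell(\theta-\theta_j)}1_{\ell\ge0})$ bounds the $j$-th term by $\|u\|_{X_{\theta,p}}$.
\item For $\theta_j>\theta$, the finite telescoping $\Lambda(2^{-m})u=\sum_{k<m}$ (with $\Lambda(1)=0$ after modification) together with Young's inequality bounds the $j$-th term by $\|u\|_{X_{\theta,p}}$, with no kernel condition needed.
\end{enumerate}

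Combining these, for $\theta\notin\{\theta_j\}$ the terms with $j>k$ are harmless, and the terms with $j\le k$ are controlled for $u\in X_{\theta,p}\cap\bigcap_{j\le k}\text{Ker}\,\Pi_j$. This gives the inclusion $\supset$. For the reverse inclusion, any $u\in[X_0,\tX_1]_{\theta,p}$ is a $J$-method sum (or dyadic sum) of elements of $\tX_1\subset X_1^j$, and this sum converges in $Y_j$ by the bound above. By continuity of the extended $\Pi_j$, we get $\Pi_j u=0$ for every $j\le k$. In the critical case $\theta=\theta_k$, the indices $j<k$ and $j>k$ are handled as above. The $k$-th term $\|t^{0}\|\Lambda(t)u\|_{Y_k}\|_{L^p(dt/t)}$ is simply kept in the norm, which yields the stated description; the condition $u\in\bigcap_{j<k}\text{Ker}\,\Pi_j$ is necessary by the same continuity argument.

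The main obstacle is the first step. I need the independence equivalence to give the splitting of $N_J$ into separate $Y_j$ pieces uniformly in $t$, and to apply to differences $\Lambda(t)u-\Lambda(t/2)u$; this is why the definition includes the two-parameter form $\Lambda(t)u+\Lambda(s)v$ with $s\sim t$. A subtler point is that each $Y_j$-norm must be well defined on elements of $X_1$, via the embedding of $X_1/X_1^j$. I will take this as part of the hypothesis, namely that the $Y_j$-seminorm on $X_1$ factors through $X_1/X_1^j$. This factorization is also what makes the kernel argument in the reverse inclusion legitimate, since it is what guarantees $\Pi_j v=0$ for $v\in\tX_1$.
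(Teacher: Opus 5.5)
Your proposal is correct and is essentially the paper's proof: Proposition~\ref{prop:abstract} combined with the independence equivalence splits the $N_J$ term into the $n$ weighted $Y_j$-pieces, and each piece is then handled by the dyadic-difference bound and the forward or backward telescoping with Young's inequality from Theorem~\ref{th:interplin}, with the kernel conditions for $\theta_j<\theta$ coming from continuity of the extended $\Pi_j$. You are more explicit than the paper about the critical case $\theta=\theta_k$, and the factorization of $\|\cdot\|_{Y_j}$ through $X_1/X_1^j$ that you list as an extra assumption is already part of the definition of interpolation order (the embedding of $X_1/X_1^j$ into $Y_j$).
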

\begin{proof}
This is a direct modification of the proof of theorem \ref{th:interplin}: 
for any $j$ such that $\theta>\theta_j$, we have continuous maps $\Pi_j:\ X_{\theta,p}\to Y_j$,
hence the inclusion $[X_0,\tX_1]_{\theta,p}\subset X_{\theta,p}\cap \text{Ker}(\Pi_j)$. 
For the reverse inclusion
just write 
\begin{eqnarray*}
\|u\|_{[X_0,\tX_1]_{\theta,p}}^p\sim \int_0^1\left(\frac{N_J(t,\Lambda(t)u)}{t^{\theta}}\right)^p
\frac{dt}{t}
&\sim &\sum_{k=1}^n\int_0^1\frac{\|\Lambda(t)u\|_{Y_k}^p}{t^{(\theta-\theta_k)p+1}}
\frac{dt}{t}
\\
&\sim& \sum_{k=1}^n\sum_0^\infty \|\Lambda(2^{-j})u\|_{Y_k}^p2^{pj(\theta-\theta_k)}.
\end{eqnarray*}
Then each term $\|\Lambda(2^{-j})u\|_{Y_k}$ is decomposed as a telescopic series 
$$\sum_{m=0}^{j-1}\|(\Lambda(2^{-m})-\Lambda(2^{-(m+1)})u\|_{Y_k}\text{ if }\theta<\theta_k,
\text{ or }
\sum_{m\geq j}\|(\Lambda(2^{-m})-\Lambda(2^{-(m+1)})u\|_{Y_k}\text{ if }\theta>\theta_k.
$$
The rest of the proof is unchanged.
\end{proof}
\paragraph{Example: several boundary conditions in Sobolev spaces} This is a rather well-known
example : $X_0=L^2(\R^{+*}\times \R^{d-1}),\ X_1=H^n(\R^{+*}\times \R^{d-1})$, and the
subspaces are defined as $X_1^k=\{u\in H^n:\ \partial_1^ku|_{x_1=0}=0\}$, for a set of 
indices $k$ included in $\{0,\cdots,n-1\}$. All the $Y_k$ are $L^2(\R^{d-1})$, we use the maps
$u\to (\partial_1^ku)|_{x_1=0}$. 
\\It is easily seen (as in example $1$) 
that for $0\leq k\leq n-1$,  $X_1^k$ has interpolation order $(k+1/2)/n$. Moreover, the spaces 
are independent : 
given a function $g$ defined on the boundary $\R^{d-1}$, 
we define maps $(R_k)_{0\leq k\leq n-1}$ by the formulas 
$$
\cF'(R_k g)(x_1,\xi')=\frac{\widehat{g}(\xi')\varphi_k(\lambda(t,\xi')x_1)}{\lambda(t,\xi')^j},
$$
where $\di \lambda(t,\xi')=\frac{1+t^{1/n}\cxip}{t^{1/n}}$,
the functions $\varphi_k$ are in $C_c^\infty(\R^+)$ and such that
$$
\partial_1^j\varphi_k(0)=\delta_k^j.
$$
Each map $R_k$ produces a function $R_kg$ such that 
\begin{equation}\label{eq:indep}
(\partial_1^kR_kg)|_{x_1=0}=g,\ 
(\partial_1^kR_jg)|_{x_1=0}=0,\ k\neq j. 
\end{equation}
Hence for  $u\in H^n$, the linear map 
$u\to Lu:=\sum_{k=0}^{n-1}R_k(\partial_1^ku|_{x_1=0})$ satisfies the property
$$
\forall\,0\leq k\leq n-1,\ \partial_1^kLu|_{x_1=0}=\partial_1^ku|_{x_1=0},
$$
(in other words, $R$ is a right inverse to the trace operator).\\
A computation gives 
\begin{equation}\label{eq:order}
\|R_kg_k\|_2+t^2\|R_kg_k\|_{H^n}^2\sim t^{(2k+1)/n}\int_{\R^{d-1}}(1+t^2\cxip^{2n})^{1-(k+1/2)/n}|\widehat{g}(\xi')|^2\xi'.
\end{equation}
Take $\Lambda(t)u=\cF^{-1}(1_{|\xi'|\leq t^{-1/n}}\frac{\widehat{Eu}}{1+
t^{2/n}\xi_1^2})$, with $E$ a smooth extension $H^n(\R^{+*}\times \R^{d-1})\to H^n(\R^d)$. 
From the fact that 
$\partial_1^ju|_{x_1=0}$ is localized at frequencies $|\xi'|\leq t^{-1/n}$, 
we see that 
$$
\|R_j(\partial_1^ju|_{x_1=0})\|_{H^n}\lesssim t^{(j+1/2)/n}\|\partial_1^ju|_{x_1=0}\|_2,
$$
so properties \eqref{eq:indep} and \eqref{eq:order} give that the spaces $(X_1^k)_{0\leq k\leq n-1}$ are independent.
\\
We deduce the identification of $[X_0,\tX_1]_{\theta,p}$ for 
$\theta\notin\{(k+1/2)/n\}_{0\leq k\leq n-1}$:
$$
[X_0,\tX_1]_{\theta,p}=\left\{u\in [L^2,H^n]_{\theta,p}:\ \forall\,k\text{ such that }\ \frac{k+1/2}{n}<\theta,\ \partial_1^ku|_{x_1=0}=0\right\}.$$
\bibliographystyle{plain}
\bibliography{biblio}
\end{document}